\newtheorem{thm}{Theorem}[section]
\newtheorem{prop}[thm]{Proposition}
\theoremstyle{definition}
\newtheorem{defn}[thm]{Definition}
\theoremstyle{remark}
\numberwithin{equation}{section}
\newcommand{\arc}[2]{{#1}\curvearrowright{#2}}
\newcommand{\la}{\langle}
\newcommand{\ra}{\rangle}
\newcommand{\HdC}{\mathbf H^{2}_{\mathbb C}}
\newcommand{\Cdu}{\mathbb C^{2,1}}
\newcommand{\Ct}{\mathbb C^{3}}
\newcommand{\C}{\mathbb C}
\newcommand{\Cp}{\mathbb{CP}^2}
\newcommand{\p}{\mathbb P}
\newcommand{\R}{\mathbb R}
\newcommand{\Pu}{\mbox{\rm PU(2,1)}}
\newcommand{\Su}{\mbox{\rm SU(2,1)}}
\newcommand{\B}{\mathcal{B}}
\newcommand{\bz}{{\bf z}}
\newcommand{\Z}{{\mathbb Z}}
\renewcommand{\Im}{\mbox{\rm Im}\,}
\renewcommand{\leq}{\leqslant}
\renewcommand{\geq}{\geqslant}
\begin{document}
	
	
	\date{Oct. 4, 2023}

	\title[]{Figure-eight knot is always over there}
	\author[J. Ma]{Jiming Ma}
	\address{School of Mathematical Sciences, Fudan University, Shanghai, China}
	\email{majiming@fudan.edu.cn}
	
	\author[B. Xie]{Baohua Xie}
	\address{School of Mathematics, Hunan University, Changsha, China}
	\email{xiexbh@hnu.edu.cn}
	
	\keywords{Complex hyperbolic space, $\mbox{CR}$-spherical uniformization, figure-eight knot complement}
	
	\subjclass[2010]{20H10, 57M50, 22E40, 51M10.}
	
	\thanks{Jiming Ma was  supported by NSFC (No.12171092), he is also a member of LMNS, Fudan University. Baohua Xie was supported by NSFC (No.11871202).}
	
    \begin{abstract}
It is well-known that complex hyperbolic triangle groups $\Delta(3,3,4)$  generated by three  complex  reflections $I_1,I_2,I_3$ in $\Pu$  has 1-dimensional moduli space. Deforming the representations from the classical $\mathbb{R}$-Fuchsian one to $\Delta(3,3,4; \infty)$, that is, when $I_3I_2I_1I_2$ is accidental parabolic, the 3-manifolds at infinity change from a Seifert 3-manifold to the  figure-eight knot complement.

 When  $I_3I_2I_1I_2$ is loxodromic, there is an open set  $\Omega \subset \partial\HdC=\mathbb S^3$ associated to   $I_3I_2I_1I_2$, which is a subset  of  the  discontinuous region. 	We show  the quotient space   $\Omega/ \Delta(3,3,4)$ is always the  figure-eight knot complement in the  deformation process. This gives the  topological/geometrical explain  that the 3-manifold at infinity of $\Delta(3,3,4; \infty)$  is the   figure-eight knot complement. In particular, this confirms  a  conjecture  of Falbel-Guilloux-Will.

    \end{abstract}

     \maketitle

	\section{Introduction}
		Let ${\bf H}^2_{\mathbb C}$ be the complex hyperbolic plane,   the   holomorphic isometry group of ${\bf H}^2_{\mathbb C}$ is $\Pu$. 	  The complex hyperbolic plane ${\bf H}^2_{\mathbb C}$ can be identified with the unit ball in ${\mathbb C}^2$, so the ideal boundary  $\partial {\bf H}^2_{\mathbb C}$ of  ${\bf H}^2_{\mathbb C}$ is  the 3-sphere $\mathbb{S}^3$. 
		
		Thurston's work on 3-manifolds has shown that geometry has an important role  in the study of topology of 3-manifolds.
	We have  three kinds of geometrical  structures on 3-manifolds related to  the  pair $({\bf H}^2_{\mathbb C}, \partial {\bf H}^2_{\mathbb C})$  with increasing group action constraints.
	
	\begin{defn} \label{def:SCR}For a smooth 3-manifold $M$:
	\begin{enumerate} 
		\item \label{SCR}	A {\it spherical CR-structure} on  $M$ is a maximal collection of distinguished charts modeled on the boundary $\partial \mathbf{H}^2_{\mathbb C}$, where coordinates changes are restrictions of transformations from  $\Pu$.
		In other words, a {\it spherical CR-structure} is a $(G,X)$-structure with $G=\Pu$ and $X=\mathbb{S}^3$;
		\item \label{CRSU}  On the other hand, a  {\it CR-structure  spherical  uniformization}  of $M$ is a homeomorphism $M = \Omega/  \rho(\pi_{1}(M))$, where $\Omega$  is an open subset of $\partial\HdC$ on which $  \rho(\pi_{1}(M))$ acts properly  discontinuously. See \cite{Falbel-Guilloux-Will, Kas18};
		\item  \label{USCR} A spherical CR-structure on $M$  is {\it uniformizable} if it is
		obtained as $M= \Omega_{\Gamma}/\Gamma$, where  $\Omega_{\Gamma}\subset \partial \mathbf{H}^2_{\mathbb C}$ is the  discontinuity region  of a discrete subgroup  $\Gamma$. The \emph{limit set} $\Lambda_{\Gamma}$ of $\Gamma$ is  $\partial \mathbf{H}^2_{\mathbb C}- \Omega_{\Gamma}$ by definition.
	\end{enumerate}
	\end{defn}
	For a discrete group $\Gamma< \Pu$, the open set  $\Omega$ in (\ref{CRSU}) of Definition \ref{def:SCR} is a subset of the  discontinuity region $\Omega_{\Gamma}$  in (\ref{USCR})  of Definition \ref{def:SCR}. So for a discrete group $\Gamma$, there is at most one  uniformizable spherical CR-structure associated to it, but  there may be infinitely many CR-structure  spherical  uniformizations associated to it.

	 For a discrete group $\Gamma < \Pu$, the 3-manifold $M=\Omega_{\Gamma}/\Gamma$ at infinity of the 4-manifold $\mathbf{H}^2_{\mathbb C}/ \Gamma$  is the analogy of the 2-manifold at infinity of a geometrically finite,  infinite volume hyperbolic 3-manifold. In other words, uniformizable spherical CR-structures  on 3-manifolds in $\mathbf{H}^2_{\mathbb C}$-geometry are the analogies
	of  conformal structures on surfaces in $\mathbf{H}^3_{\mathbb R}$-geometry.
	
	In  the three kinds of  geometrical structures of Definition \ref{def:SCR},   uniformizable spherical CR-structures  on 3-manifolds seem to be the most interesting ones. But in contrast to results on other geometric structures carried on 3-manifolds, there are relatively few examples known about them. 	 A possible way to get   uniformizable spherical CR-structures is via the deformations of triangle groups  in $\Pu$.

	Let $T(p,q,r)$ be the abstract  triangle group with presentation
	$$T(p,q,r)=\langle \sigma_1, \sigma_2, \sigma_3 | \sigma^2_1=\sigma^2_2=\sigma^2_3=(\sigma_2 \sigma_3)^p=(\sigma_3 \sigma_1)^q=(\sigma_1 \sigma_2)^r=id \rangle,$$
	where $p,q,r$ are positive integers or $\infty$ satisfying $$\frac{1}{p}+\frac{1}{q}+\frac{1}{r}<1.$$  We assume that $p \leq q \leq r$. If $p, q$ or $r$ equals $\infty$, then
	the corresponding relation does not appear.  The  ideal triangle group is the case that $p=q=r=\infty$.
	A \emph{$(p,q,r)$ complex hyperbolic triangle group} is a representation $\rho$ of $T(p,q,r)$ into $\Pu$
	where the generators fix complex lines. We denote $\rho(\sigma_{i})$ by $I_{i}$, and the image group by $\Delta(p,q,r)=\langle I_1,I_2,I_3 \rangle$. It is well known  \cite{schwartz-icm} that the space of $(p,q,r)$-complex reflection triangle groups has real dimension one if $3 \leq p \leq q \leq r$.

	The  isometry group of the real hyperbolic plane ${\bf H}^2_{\mathbb R}$ is $\mbox{\rm PO(2,1)}$, and  it is well known that the  ideal triangle group is rigid in  $\mbox{\rm PO(2,1)}$.  Goldman and Parker  \cite{GoPa} initiated the study of the deformations of ideal triangle group into  $\Pu$. 
	They  gave an interval  in the moduli space of complex hyperbolic ideal triangle groups, for points in this interval  the corresponding representations are discrete and faithful.
	They conjectured that a complex hyperbolic ideal triangle group $\Delta(\infty,\infty, \infty)=\langle I_1, I_2, I_3 \rangle$ is discrete and faithful if and only if $I_1 I_2 I_3$ is not elliptic. Schwartz proved  Goldman-Parker's conjecture in \cite{schwartz-go-p1, schwartz-go-p2}. Furthermore,  Schwartz analyzed the complex hyperbolic ideal triangle group $\Gamma$ when $I_1 I_2 I_3$ is parabolic, and showed  the 3-manifold at infinity of the quotient space ${\bf H}^2_{\mathbb C}/{\Gamma}$ is commensurable with the
	Whitehead link complement in the 3-sphere \cite{schwartz-litg}. In other words, the Whitehead link complement admits uniformizable spherical CR-structure. Seifert 3-manifolds admitting  uniformizable spherical CR-structures are rather easy to construct, but the  Whitehead link complement  is the first example of  hyperbolic 3-manifold which admits  uniformizable spherical CR-structure.

	 Richard Schwartz \cite{schwartz-icm} has also  conjectured the necessary and sufficient condition for a general complex hyperbolic  triangle group $$\Delta(p,q,r)=\langle I_1,I_2,I_3\rangle < \Pu$$ to be a discrete and faithful  representation of $T(p,q,r)$. Schwartz's conjecture has been proved in a few cases \cite{jwx, pwx,par-will2}.


 The \emph{critical point} of the 1-dimensional deformation space of complex hyperbolic  triangle groups is a point such that    some preferred word $W_{A}$ or $W_{B}$ is accidental parabolic. For more details,  see  \cite{schwartz-icm}. People found   several  more examples of cusped hyperbolic 3-manifolds which admit uniformizable spherical CR-structures  at these critical points \cite{acosta, deraux-scr, der-fal, jwx,  mx, MaXie2021}. 	Almost all of the   examples of  uniformizable spherical CR-structures   gotten now  are  via difficult and  sophisticated analysis. But we do not know the topological/geometrical reason  the 3-manifolds at infinity of the groups associated to  critical points should be the ones we got. Falbel-Guilloux-Will \cite{Falbel-Guilloux-Will} proposed a method to predict the 3-manifold when there is an accidental parabolic element. 
	
	We now just consider the representations of $T(3,3,4)$ into   $\Pu$ with complex reflection generators $I_1,I_2,I_3$. We can parametrize the representations by $t \in [1/3, \sqrt{2}-1]$, and the even subgroup of  the image group is denoted by $\Gamma_{t}$, see Section \ref{section:334representation} for more details. Moreover,
	\begin{itemize} 
		\item When  $t= \sqrt{2}-1$, the image group lies in  $\mbox{\rm PO(2,1)}$. So we have the classical   $\mathbb{R}$-Fuchsian group;
		\item For any  $t \in (3/8, \sqrt{2}-1]$,  $I_3I_2I_1I_2$ is loxodromic;
		\item When  $t=3/8$, $I_3I_2I_1I_2$ is parabolic. This is an accidental parabolicity, so  $t=3/8$ corresponds to the  critical point in the moduli space of $\Delta(3,3,4)$ in our parameterization;  
		\item When  $t \in [1/3, 3/8)$, $I_3I_2I_1I_2$ is elliptic. We will not consider  representations in this interval.
	\end{itemize}

It is showed by Parker-Wang-Xie  \cite{pwx} for each  $t \in [3/8, \sqrt{2}-1]$, the corresponding representation is discrete and faithful. Since   when  $t=\sqrt{2}-1$,   we have a $\mathbb{R}$-Fuchsian group, so 3-manifold at infinity of the corresponding group is just  the unit tangent bundle over the real hyperbolic $(3,3,4)$-orbisurface.  But  when   $t=3/8$, there is a new  parabolic element $I_3I_2I_1I_2$, so the 3-manifold at infinity of the corresponding group must change. It is showed by Deraux-Falbel \cite{der-fal}  the 3-manifold at infinity of the even subgroup   $\Gamma_{3/8}$ is the figure-eight knot complement. But we do not know the reason that the 3-manifold at infinity of
	  $\Delta(3,3,4)$ when   $I_3I_2I_1I_2$ is  parabolic  should be this one. 
 Falbel-Guilloux-Will \cite{Falbel-Guilloux-Will} proposed  an explanation of this phenomenon.
 
For all  $t \in (3/8, \sqrt{2}-1]$,  $I_3I_2I_1I_2$ is loxodromic. Let $p_1$ and $p_2$ be the attractive and repulsive fixed points of it, they determine a $\C$-circle. We denote by $\alpha_1$ a preferred one of the two arcs with end points  $p_1$ and $p_2$  in the  $\C$-circle (see Section \ref{section:Rfuchsian} for this arc).  Let $\Lambda_{t} $ be the limit set of  $\Gamma_{t}$. Then it is a topological circle. The \emph{crown} associated to $I_3I_2I_1I_2$ is the subset of $\mathbb{S}^3$ defined as 
 $${\rm Crown}={\rm Crown}_{\Gamma_{t},I_3I_2I_1I_2}= \Lambda_{t} \cup\Bigl(\bigcup_{g\in\Gamma_{t}} g\cdot\alpha_1\Bigr).$$
 We denote $\Omega_{\Gamma_{t},I_3I_2I_1I_2}\subset \Omega_{\Gamma_{t}}$ as the complement of ${\rm Crown}_{\Gamma_{t},I_3I_2I_1I_2}$ in $\mathbb{S}^3$. Recall that $\Omega_{\Gamma_{t}}=\mathbb{S}^3-\Lambda_{t}$ is the discontinuous region of $\Gamma_{t}$'s action on $\mathbb{S}^3$.

  It was  shown in \cite{Dehornoy} that $\Omega_{\Gamma_{t},I_3I_2I_1I_2}/\Gamma_{t}$ is homeomorphism to  the  figure-eight knot complement when $t=\sqrt{2}-1$. In fact  Falbel-Guilloux-Will \cite{Falbel-Guilloux-Will} identified  this manifold as drilling out  the unit tangent bundle of $(3,3,4)$-orbisurface a certain closed orbit associated to $I_3I_2I_1I_2$. Moreover, Falbel-Guilloux-Will \cite{Falbel-Guilloux-Will} conjectured that the quotient space of $\Omega_{\Gamma_{t},I_3I_2I_1I_2}$ by $\Gamma_{t}$ is always the  figure-eight knot complement for any $t \in (3/8, \sqrt{2}-1)$. So each of them gives a  CR-structure  spherical  uniformization of   figure-eight knot complement.  The last one, that is when $t=3/8$, gives the uniformizable spherical CR-structure on the figure-eight knot complement. Which corresponds to pinching on the limit set of  $\Gamma_{t}$ to the limit set of  $\Gamma_{3/8}$. So this conjecture explains how to get  the 3-manifold at infinity of  $\Gamma_{3/8}$ from the  3-manifold at infinity of  a  $\mathbb{R}$-Fuchsian group.
   Falbel-Guilloux-Will \cite{Falbel-Guilloux-Will} confirmed the conjecture when $t$ is near to $\sqrt{2}-1$.

	We certificate Falbel-Guilloux-Will's conjecture totally   in this paper:
	
	\begin{thm} \label{thm:figure8} For the parameterazation of complex hyperbolic groups $\Delta(3,3,4)$ by $t \in (3/8, \sqrt{2}-1]$:
		\begin{enumerate} 
			\item \label{unittangentbundle} The 3-manifold at infinity of the even subgroup  $\Gamma_{t}$ is the unit tangent bundle of the $(3,3,4)$-orbisurface for all $t \in (3/8, \sqrt{2}-1]$;
			\item \label{figure8} The quotient space of $\Omega_{\Gamma_{t},I_3I_2I_1I_2}$ by $\Gamma_{t}$ is always the  figure-eight knot complement  for all $t \in (3/8, \sqrt{2}-1]$.
			\end{enumerate} 
	\end{thm}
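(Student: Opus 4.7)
The plan is to construct, uniformly in $t\in(3/8,\sqrt 2-1]$, a Ford-type fundamental polyhedron $\mathcal F_t\subset\HdC$ for $\Gamma_t$ whose combinatorics and side-pairings are independent of $t$, so that the 3-manifold at infinity is realized as $\mathcal F_t^\infty/\!\sim$ with $\mathcal F_t^\infty:=\mathcal F_t\cap\partial\HdC$ and face identifications that vary continuously. Since Parker-Wang-Xie \cite{pwx} have already shown that each $\rho_t$ is discrete and faithful on the whole interval, with no accidental parabolic appearing except at $t=3/8$, the aim is to produce such a continuous family of polyhedra whose intersection with the ideal boundary provides a fundamental domain for $\Gamma_t$ acting on $\Omega_{\Gamma_t}$.

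For part (1), at the endpoint $t=\sqrt 2-1$ the representation is $\mathbb R$-Fuchsian: $\Gamma_t$ stabilizes a totally real hyperbolic plane $\HdR\subset\HdC$ with quotient the orbisurface $\Sigma_{(3,3,4)}$. The natural $\mathbb S^1$-bundle $\partial\HdC\to\partial\HdR$ arising from the Hopf fibration of $\mathbb S^3$ restricts, over the discontinuity region, to the unit tangent circle bundle of $\Sigma_{(3,3,4)}$, giving the unit tangent bundle $UT\Sigma_{(3,3,4)}$ as the 3-manifold at infinity. Combinatorial constancy of $\mathcal F_t^\infty$ then propagates this identification across the whole interval.

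For part (2), I would follow the crown under the deformation. The two fixed points of $I_3I_2I_1I_2$, and hence the arc $\alpha_1$, depend continuously on $t$, so the intersection of the $\Gamma_t$-orbit of $\alpha_1$ with $\mathcal F_t^\infty$ varies continuously; after the (combinatorially constant) side-pairings it descends to a simple closed curve $\gamma_t\subset\Omega_{\Gamma_t}/\Gamma_t\cong UT\Sigma_{(3,3,4)}$ of constant isotopy class. At $t=\sqrt 2-1$, Dehornoy \cite{Dehornoy} identifies $UT\Sigma_{(3,3,4)}\setminus\gamma_{\sqrt 2-1}$ with the figure-eight knot complement, and Falbel-Guilloux-Will \cite{Falbel-Guilloux-Will} realize $\gamma_t$ as a distinguished closed orbit associated to $I_3I_2I_1I_2$; invariance of the isotopy class of $\gamma_t$ then yields the figure-eight knot complement for every $t\in(3/8,\sqrt 2-1]$.

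The main obstacle is securing the combinatorial constancy of $\mathcal F_t$, and of its intersection with the $\Gamma_t$-orbit of $\alpha_1$, all the way down to (but not including) $t=3/8$. As $t\to 3/8^+$ the loxodromic $I_3I_2I_1I_2$ degenerates to a parabolic, so the two endpoints of $\alpha_1$ collide and translates $g\cdot\alpha_1$ can bunch up near the limit set; one must verify, via explicit bisector/spinal-sphere estimates, that no new face incidences, no new tangencies, and no new intersections between $\mathcal F_t^\infty$ and translates of $\alpha_1$ appear before the parabolic moment. Once those uniform estimates are in place, the Poincar\'e polyhedron theorem together with the identifications at $t=\sqrt 2-1$ completes both parts of the theorem.
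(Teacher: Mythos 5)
Your strategy is essentially the one the paper follows: take the Parker--Wang--Xie equidistant polyhedron for $\Gamma_t$ (the paper uses the Dirichlet domain centred at the fixed point of $I_2I_1$, bounded by eight bisectors), use the fact from \cite{pwx} that its combinatorics and side-pairings are constant on $(3/8,\sqrt{2}-1]$, anchor everything at the $\mathbb R$-Fuchsian endpoint via Dehornoy's theorem, and then show that the trace of the crown on the ideal boundary of the polyhedron is stable throughout the deformation. Part (1) is handled exactly as you propose.

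For part (2), one step in your plan is genuinely incomplete. The trace of the crown on the solid torus $\partial_{\infty}D_t$ is a system of eight properly embedded arcs $\hat{\alpha_i},\hat{\beta_i}$; continuity of these arcs together with the constancy of which boundary annulus each endpoint lies on (the paper's Proposition 6.1, proved by exactly the explicit spinal-sphere estimates you anticipate) does not by itself determine the homeomorphism type of their complement. You must also know that the eight arcs remain pairwise disjoint for all $t$, and you need to identify the link type of this tangle in the solid torus at least once; your checklist (``no new face incidences, no new tangencies, no new intersections between $\mathcal F_t^{\infty}$ and translates of $\alpha_1$'') records arc-versus-face incidences but not arc-versus-arc ones. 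The paper supplies this with a dedicated device: each arc lies on a $\C$-circle bounding an affine disk in its contact plane, and the ``cutting disk'' it cuts out is properly embedded with boundary on the arc and on the two adjacent spinal spheres; its Proposition 6.2 shows the eight cutting disks are pairwise disjoint for all $t$, which simultaneously gives disjointness of the arcs and exhibits them as boundary-parallel and unlinked, so the topology of the complement is pinned down by the (constant) endpoint data rather than by a pure continuity-of-isotopy argument. A further point to make explicit is the paper's Proposition 5.4: only finitely many $\Gamma_t$-translates of $\alpha_1$ meet the fundamental domain, namely those eight arcs, so the crown's trace really is this finite tangle. With those two ingredients added, your argument closes and agrees with the paper's proof.
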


So in the  deformation process, the figure-eight knot is always over there! This explains the 3-manifold at infinity of the even subgroup  $\Gamma_{3/8}$ (with accidental parabolic element) is  the figure-eight knot \cite{der-fal}.

We prove Theorem \ref{thm:figure8} in the following steps:
	\begin{itemize}
		\item  For  $\Delta(3,3,4)=\langle I_1,I_2,I_3\rangle$ depends on $t \in (3/8, \sqrt{2}-1]$,   $I_1I_2$ has order 4, and $I_1I_2$ has fixed point $p_0 \in \mathbf{H}^2_{\mathbb C}$;
			\item  Consider  the Dirichlet domain $D_{t}$ of $\Gamma_{t} < \Delta(3,3,4)$ with center $p_0$, $D_{t}$ has eight facets \cite{pwx};
			\item The ideal boundary  $\partial_{\infty}D_t=D_{t}\cap \partial \mathbf{H}^2_{\mathbb C}$ is a solid torus. Moreover, the  boundary  of $\partial_{\infty}D_t$  consists of eight annuli, the  side-pairing pattern on them is independent of $t \in (3/8,  \sqrt{2}-1]$. So  the 3-manifold at infinity of the  group  $\Gamma_{t}$ is independent of $t$. This proves (\ref{unittangentbundle}) of Theorem \ref{thm:figure8};
			\item We then consider the complement  of the crown  in  $\partial_{\infty}D_{t}$, that is, $$ \partial_{\infty}D_{t}-{\rm Crown}_{\Gamma_{t},I_3I_2I_1I_2}.$$ 
			Which is a fundamental domain of  $\Gamma_{t}$'s action on $\Omega_{\Gamma_{t},I_3I_2I_1I_2}$. 
			In fact $\partial_{\infty}D_{t}\cap {\rm Crown}_{\Gamma_{t},I_3I_2I_1I_2}$ are exactly eight arcs. 
		We will show the topology and 	the  side-pairing pattern on $\partial_{\infty}D_{t}-{\rm Crown}_{\Gamma_{t},I_3I_2I_1I_2}$ are independent of $t$. This in turn  proves (\ref{figure8}) of Theorem \ref{thm:figure8}.
		\end{itemize}

	\textbf{Acknowledgement}: Part of the work was carried out when Jiming Ma was  visiting  Hunan University in the summer of 2022, the hospitality  is gratefully appreciated. The second author thanks John Parker for a useful discussion about the parametrization  of the deformation space of the triangle group $\Delta(3,3,4)$ several years ago.

\section{Background}\label{sec:background}
 We will briefly introduce some background of complex hyperbolic geometry  in this section. One can refer to Goldman's book \cite{Go} for more details.

\subsection{Complex projective space and complex hyperbolic plane}
  The projective space $\Cp$ is the quotient of the complex space $\Ct$ minus the origin, by the non-zero complex numbers.  We denote by
$\p$ the projectivisation map $\p:\Ct\backslash\{0\}\to \Cp$.  We will constantly use points in the projective space $\Cp$ and lifts to
 $\Ct$(or in $\Cdu$, see below) throughout this paper.  In this situation, points in $\Ct$ will be denoted by $\bz$, and $z$ will denote the 
 image in $\Cp$ under projectivisation.
 
 Let $\Cdu$ denote a copy of $\Ct$ equipped with a Hermitian form $\la\cdot ,\cdot \ra$ of signature $(2,1)$ on $\Ct$, and define
 \begin{align*}
 	V_{-}&=\{ Z\in \Ct: \la Z,Z \ra<0\},\\
 	V_{+}&=\{ Z\in \Ct: \la Z,Z \ra>0\},\\
 	V_{0}&=\{ Z\in \Ct: \la Z,Z \ra=0\}.
 \end{align*}
The complex hyperbolic plane $\HdC$ is the projectivsation of the cone $V_{-}$ in $\Cdu$, equipped with a Hermitian  metric
induced by the Hermitian form $\la\cdot ,\cdot \ra$. The projection  to $\Cp$ of the quadratic  $V_{0}$ can be thought of as 
the boundary at infinity of $\Cdu$, and we will denote it as $\partial\HdC$. The space $\HdC$ is homeomorphic to  a ball $B^4$, and $\partial\HdC$  is homeomorphic to 3-sphere $\mathbb S^3$.

The complex hyperbolic distance on $\HdC$ is given by 
$$\cosh\left(\frac{d(p,q)}{2}\right)= \frac{\left|\la {\bf p} ,{\bf q} \ra\right|^2  }{\left|\la{\bf p} ,{\bf p} \ra\right|\left|\la{\bf q} ,{\bf q} \ra\right| }.$$
The subgroup of $\mbox{\rm SL(3,$\C$)}$ of maps that preserve the Hermitian form $\la\cdot ,\cdot \ra$ is by definition $\Su$ and its projectivisation $\Pu$ the group of holomorphic 
isometries of $\HdC$. We will often work with 
$\Su$, which is a 3-fold cover of $\Pu$.

\subsection{Two models}
There are two special choices of the Hermitian forms
\begin{equation*}
	J_1=\left[
	\begin{array}{ccc}
		1 & 0 & 0 \\
		0 & 1 & 0 \\
		0 & 0 & -1 \\
	\end{array}
	\right]\quad {\rm and}\quad J_2=\left[
	\begin{array}{ccc}
		0 & 0 & 1 \\
		0 & 1 & 0 \\
		1 & 0 & 0 \\
	\end{array}\right].
\end{equation*}  
Note that they are conjugate by the Cayley  transformation
\begin{equation*}\label{eq:cayley}
	Cay=\frac{1}{\sqrt{2}}\left[
	\begin{array}{ccc}
		1 & 0 &1 \\
		0 & \sqrt{2} & 0 \\
		1 & 0 & -1 \\
	\end{array}
	\right].
\end{equation*}

By using the  Hermitian form  given by $J_1$, we
obtain the ball model of $\HdC$.  With this model, $\HdC$ can be seen as the unit ball in $\C^2$, where $\C^2$ itself is seen as the affine
chart $z_3=1$ of $\Cp$. Any point in $\HdC$ can be lifted to $\C^3$ in a  unique way as a vector
$[z_1,z_2,1]^{T}$, where $z_i\in\C$ and $\left| z_1\right|^2+\left| z_2\right|^2<1$.  The boundary  $\partial\HdC$ is just the 3-sphere $\mathbb S^3$ defined by   $\left|z_1\right|^2+\left| z_2\right|^2=1$.

The second model that one will consider is the Siegel model if one uses the form $J_2$. It will be more convenient to  analyze Heisenberg geometry and  draw pictures. In this model, the projection of $V_{-}\cup V_{0}$ to $\Cp$ is contained in the affine chart $z_3=1$, except for the 
projection of $[1,0,0]^T$, which is at infinity. Thus any point in the closure of $\HdC$ admits a unique lift to $\C^3$, which is given by
\begin{equation*}
\psi(z,t,u)=\left[
\begin{array}{c}
	\frac{-|z|^2-u+it}{2} \\
	z \\
	1 \\
\end{array}
\right]\quad {\rm and}\quad \left[
\begin{array}{c}
1 \\
0 \\
0 \\
\end{array}
\right],
\end{equation*}
where $z\in \C, t\in \R$ and $u\geq 0$. There coordinates are often called horospherical coordinates.
When necessary, we will  call the vector given above the
standard lift of a point in $\HdC$. We  will denote by 
$[z,t]$ the point in $\partial\HdC$ which is the projection of $\psi(z,t,0)$. Then one can identify $\partial\HdC$ with $\C\times\R\cup \{\infty\}\}$. Removing the point at infinity, we obtain the Heisenberg group, defined as  $\C\times\R$ with multiplication
\begin{equation*}
	\left(w,s\right)\ast \left(z,t\right)=\left(w+z,s+t+2\Im(w\bar{z}) \right). 
\end{equation*}

\subsection{Two totally geodesic submanifolds and their boundarys}\label{subsection:affinedisk}

There are two kinds of totally geodesic submanifolds of real dimension 2 in 
$\HdC$: \emph{complex lines} in $\HdC$ are complex geodesics(represented by $\mathbf{H^{1}_{\mathbb C}}$) and \emph{Langrangian planes} in $\HdC$ are totally
real geodesic 2-planes(represented by $\mathbf{H^{2}_{\mathbb R}}$). Each
of these  totally  geodesic submanifolds is a model of the hyperbolic plane.
A \emph{polar vector} of a complex line is the unique vector(up to scaling) in $V_{+}$ perpendicular to this complex line.

A discrete subgroup of  $\Pu$ preserving a complex line is called $\C$-Fuchsian and is isomorphic to a subgroup of $P(U(1)\times U(1,1))\subset \Pu$. A discrete subgroup of  $\Pu$ preserving a Langrangian plane is called $\R$-Fuchsian and is isomorphic to a subgroup of $SO(2,1)\in SU(2,1)$.

Consider the complex hyperbolic space  $\HdC$ and its boundary $\partial\HdC$.
We define the $\C$-circle in $\partial\HdC$ to be the boundary of a complex geodesic in  $\HdC$. Analogously, we define the $\R$-circle in $\partial\HdC$ to be the  boundary of a Langrangian plane  in $\HdC$.

\begin{defn} For a given complex geodesic $C$, a complex reflection with minor $C$ is the isometry $\iota_{C}$ in $\Pu$ given by 
	$$\iota_{C}=-z+2\frac{\la z,c\ra}{\la c, c\ra}c,$$ where $c$ is a polar vector of $C$.
\end{defn}

\begin{defn} The contact plane at $M=(a,b,c)$ is the plane
	$P(M):=Z-c+2aY-2bX$.
\end{defn}

The $\C$-circle of center $M=(a,b,c)$ and radius $R$ is the  intersection of
the contact plane at $M$ and the cylinder $(X-a)^2+(Y-b)^2=R^2$.

\begin{prop}
In the Heisenberg group, $\C$-circles are either vertical lines or ellipses
whose projections  on the $z$-plane are circles.
\end{prop}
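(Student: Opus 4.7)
The plan is to derive the shape of a $\C$-circle by writing down the incidence condition in horospherical coordinates and separating real and imaginary parts. Let $C\subset\HdC$ be a complex geodesic and let $c=[c_1,c_2,c_3]^T$ be a polar vector in $V_+$. A boundary point $[z,t]\in\partial\HdC$ belongs to $\partial C$ iff its standard lift $\psi(z,t,0)=\bigl[\tfrac{-|z|^2+it}{2},\,z,\,1\bigr]^T$ is $\la\cdot,\cdot\ra$-orthogonal to $c$. Using the Siegel Hermitian form $J_2$ this orthogonality condition becomes the single complex equation
\[
\frac{-|z|^2+it}{2}\,\bar c_3+z\,\bar c_2+\bar c_1=0.
\]
So the whole proposition is read off by a case split on whether the point at infinity $[1,0,0]^T$ lies on $\partial C$, i.e.\ whether $c_3=0$.

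In the degenerate case $c_3=0$, the equation reduces to $z\bar c_2+\bar c_1=0$, which fixes $z$ to a single complex value (with $t$ free); adding the point at infinity I recover a vertical line in the Heisenberg group, as asserted. In the generic case $c_3\ne0$ I normalize $c_3=1$ and split into real and imaginary parts with $z=x+iy$ and $c_k=c_k'+ic_k''$. The real part gives
\[
(x-c_2')^2+(y-c_2'')^2=2c_1'+|c_2|^2,
\]
a circle in the $z$-plane whose squared radius equals $\la c,c\ra>0$, so nondegenerate. The imaginary part gives
\[
t=2c_1''+2c_2''x-2c_2'y,
\]
which identifies as the contact plane at $M=(c_2',c_2'',2c_1'')$ in the statement (matching $a=c_2'$, $b=c_2''$, $c=2c_1''$). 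Parametrizing the $z$-circle as $(x,y)=(a+R\cos\theta,b+R\sin\theta)$ and substituting into the plane equation yields a curve whose $t$-coordinate is a sinusoidal function of $\theta$, hence an ellipse that projects to the circle; this is exactly the intersection of the contact plane with the vertical cylinder over that circle.

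This is a routine computation rather than a problem with a genuine obstacle; the only subtlety to watch is the correspondence between the polar data $(c_1,c_2,c_3)$ and the geometric data $(M,R)$ of center and radius, and checking that the positive definiteness of $\la c,c\ra$ exactly gives a nondegenerate circle (rather than a point or an imaginary radius). Once that bookkeeping is done, both bullet points in the statement fall out directly, and the dichotomy vertical line/horizontal-cylinder ellipse corresponds precisely to the geometric dichotomy of whether or not the underlying complex geodesic contains the distinguished point at infinity of the Siegel model.
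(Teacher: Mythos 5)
Your proof is correct. The paper states this proposition as background without proof, so there is no in-paper argument to compare against; your computation is the standard one and is fully consistent with the data the paper records immediately afterwards: the real part of your orthogonality equation reproduces the circle $\left|z-z_0\right|=R$ with $R^2=\la c,c\ra>0$, the imaginary part reproduces the contact-plane equation $t=t_0+2\Im(\bar z z_0)$, and the dichotomy $c_3=0$ versus $c_3\neq 0$ correctly detects whether the $\C$-circle passes through the point at infinity of the Siegel model.
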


For a given pair of distinct points in $\partial\HdC$, there is a unique
$\C$-circle passing through them. A finite $\C$-circle is determined by a center and a radius. For  example, the finite  $\C$-circle with center
$(z_0,t_0)$ and radius $R>0$ has a polar vector
\begin{equation*}
 \left[
	\begin{array}{c}
		(R^2-\left|z_0\right|^2+it_0)/2\\
		z_0 \\
		1 \\
	\end{array}
	\right],
\end{equation*}
and in it any point $(z,t)$ satisfies the equations
$$ \begin{cases}
\left| z-z_0\right|=R,\\
	t=t_0+2\Im(\bar{z}z_0). \\
\end{cases}$$

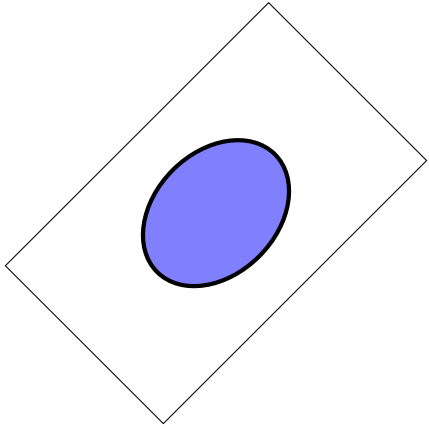
\begin{figure}
	\begin{center}
		\begin{tikzpicture}[scale=0.7] 
		\draw (-4,-1)--(1,4)--(4,1)--(-1,-4)--(-4,-1);	
		 \filldraw[rotate=45][fill = blue!50!white,draw = black][ultra thick] (0,0)
		 ellipse (45pt and 33pt);	
		\end{tikzpicture}	 
	\end{center}
	\caption{The affine disk bounded by the finite $\C$-circle in the contact plane.}
	\label{fig:cdisk}
\end{figure}

\begin{defn} \label{definition:cdisk}We define the  $\C$-disk to be the affine disk bounded by the finite $\C$-circle in the contact plane, see Figure \ref{fig:cdisk}.
\end{defn}
The condition for self-intersection between the complex lines defined by 
polar vectors $v_1$ and $v_2$ is
\begin{equation}\label{eq:linked-circle}
L(v_1,v_2)=\left|\la v_1, v_2\ra\right|^2-\la v_1, v_1\ra \la v_2, v_2\ra<0.
\end{equation}
This condition $L(v_1,v_2)>0$ was also known as a non-linking condition
for two $\C$-circles with polar vectors $v_1$ and $v_2$, see \cite{par}.

\subsection{Bisectors and Dirichlet domain}
There are no totally geodesic real hypersurface $\HdC$, and so we must choose hypersurfaces 
for sides  of our polyhedron. We choose to work with bisector. A bisector in $\HdC$ is the locus
of points equidistant (with respect to the Bergman metric) from a given pair of points in $\HdC$.
Suppose that these points are $u$ and $v$. Choose lifts $\mathbf{u}$, ${\mathbf{v}}$ of $u$ and $v$ so that
$\la \mathbf{u}, \mathbf{u}\ra =\la  \mathbf{v}, \mathbf{v}\ra$.  Then the bisector equidistant from
$u$ and $v$ is
$$\B=\B(u,v)=\{p\in \HdC: |\la  \mathbf{p}, \mathbf{u}\ra|=|\la  \mathbf{p}, \mathbf{v}\ra| \}.$$

Suppose that we are given three points $u, v_1$ and $v_2$ in $\HdC$.  If the three corresponding
vectors $u, v_1$ and $v_2$ in $V_{-}$ form a basis for  $\C^{2,1}$ then the intersection 
$\B(u,v_1)\cap \B(u,v_2)$ is called a Giraud disc.  This is a particularly nice type of bisector intersection.

Suppose that $\Gamma$  is a discrete group of $\Pu$.
Let $p_0$ be a point of $\HdC$ and write $\Gamma_{p_0}$ for 
the stabilizer of $p_0$ in $\Gamma$.  Then  the Dirichlet
domain $D_{p_0}(\Gamma)$ for  $\Gamma$  with centre 
$p_0$ is  defined to be 

$$D_{p_0}(\Gamma)=\{p\in \HdC: d(p,p_0)<d(p, g(p_0))\,\ {\rm for\,\ all} \,\ g\in \Gamma-\Gamma_{p_0}\}$$

We define the spinal sphere $\mathcal{S}\in  \partial\HdC$ as the boundary of the bisector $\B$ in $\HdC$.  Note that two  spinal spheres have an intersection if and only if the corresponding  bisectors have an intersection.

 \section{Complex hyperbolic triangle groups $\Delta(3,3,4)$}  \label{section:334representation}
 
 Let $I_i$ be a reflection along the complex line $C_i$ for $i=1,2,3$. We assume that $C_{i-1}$ and $C_{i}$
either meet at the angle $\pi/p_i$ for some integer $p_i\geq 3$ or else $C_{i-1}$ and $C_{i}$ are asymptotic, in which case they make an angle 0 and  we write $p_i=\infty$, where the indices are taken mod 3. The subgroup $\Delta(p_1,p_2,p_3)$ of $\Pu$
 generated by $I_1, I_2$  and $I_3$  is called a {\it complex hyperbolic triangle group}. For fixed $p_1,p_2,p_3$, modulo conjugacy in $\Pu$, there exists in general a $1$-parameter family of complex hyperbolic triangle group  $\Delta(p_1,p_2,p_3)$.

We consider the deformation space of complex hyperbolic triangle group  $\Delta(3,3,4)$, generated by three complex reflections $I_1, I_2$  and $I_3$. As an abstract group, it is given by 
$$\la \sigma_1,\sigma_2,\sigma_3\mid \sigma_1^2=\sigma_2^2=\sigma_3^2=(\sigma_1\sigma_2)^4=(\sigma_1\sigma_3)^3= (\sigma_2\sigma_3)^3=id\ra.$$

We will describe a parametrization  of the deformation space of $\Delta(3,3,4)$, which is a little different from that in \cite{pwx}.

Suppose that the polar vectors $n_1$, $n_2$ of the complex lines $C_1$, $C_2$ are given by 
\begin{equation*}
n_1=\left[
\begin{array}{c}
	0 \\
	1 \\
	0 \\
\end{array}
\right]\quad {\rm and}\quad  n_2=\left[
\begin{array}{c}
1/\sqrt{2} \\
1/\sqrt{2} \\
0 \\
\end{array}
\right].
\end{equation*}
Then the corresponding complex reflections $I_1$  and  $I_2$ are given by 
\begin{equation} \label{mtrixs:I1I2}
I_1=\left[
\begin{array}{ccc}
	0 & 0 & 1 \\
	0 & 1 & 0 \\
	1 & 0 & 0 \\
\end{array}
\right]\quad {\rm and}\quad I_2=\left[
\begin{array}{ccc}
	0 & 0 & 1 \\
	0 & 1 & 0 \\
	1 & 0 & 0 \\
\end{array}
\right].
\end{equation} 

We may also suppose that the polar vector $n_3$ of $C_3$ is
\begin{equation*}
n_3=\left[
\begin{array}{c}
	a \\
	b e^{i\theta} \\
	d \\
\end{array}
\right].
\end{equation*}
Furthermore, we can assume that $a$, $b$, $d$ are nonnegative real numbers by conjugating a diagonal map ${\rm Diag}(e^{i\beta}, e^{i\beta}, e^{-2i\beta})$ if necessary.
After a normalization of $n_3$, we have
\begin{equation*}
a^2+b^2-d^2=1.
\end{equation*}
The matrix for the complex reflection $I_3$ is given by 
\begin{equation}\label{mtrixs:I3}
I_3=\left[
\begin{array}{ccc}
	a^2-b^2+d^2 & 2a b e^{i\theta}& 2 a d \\
	2a b e^{-i\theta} & -a^2+b^2+d^2 & 2 b d e^{-i\theta} \\
	-2 a d & -2 b d e^{i\theta} & -a^2-b^2-d^2 \\
\end{array}
\right].
\end{equation} 
One may always assume $\theta\in [0, \pi]$ by complex conjugating if necessary.

The condition that $I_1I_3$ and $I_2I_3$ have order 3 is equivalent to $tr(I_1I_3)=tr(I_2I_3)=0$.  That is,
$$-a^2+3b^2+d^2=0$$
and
$$4 ab\cos{\theta}+a^2-b^2+d^2=0.$$
Since we know that $a^2+b^2=d^2+1$, we have
\begin{equation}\label{eq:abd}
b=1/2, \quad 2a\cos{\theta}=1/2-2a^2, \quad d^2=(4a^2-3)/4.
\end{equation} 

We also have that $d^2$ is nonnegative and $\left| 1/2-2a^2\right|\leq 2a $ if and only if $ \sqrt{3}/2\leq a\leq (\sqrt{2}+1)/2$. In other words, our parametrization of the deformation space of $\Delta(3,3,4)$  is given by $$a \in[\sqrt{3}/2, (\sqrt{2}+1)/2].$$
 In particular, the entries of $n_3$ are  all real when $a= (\sqrt{2}+1)/2$.  Thus the complex hyperbolic triangle group  $\Delta(3,3,4)$ lies in $SO(2, 1)$ when  $a= (\sqrt{2}+1)/2$.

\begin{prop}
Let $I_1, I_2$ and $I_3$ be given by \eqref{mtrixs:I1I2} and \eqref{mtrixs:I3}. Suppose  $I_1I_3$  and  $I_2I_2$ have order 3. Then  $I_1I_3I_2I_3$ is elliptic if and only if $a<1$.
\end{prop}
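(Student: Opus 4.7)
The plan is to recognise $I_1I_3I_2I_3$ as a product of two complex reflections and then apply the classical trichotomy for such products based on the mutual position of their mirror complex lines.

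Since $I_3$ is an involution, $I_3I_2I_3 = I_3I_2I_3^{-1}$ is itself a complex reflection, namely the reflection in the complex line $I_3(C_2)$, which has polar vector $v'_2 := I_3 n_2$. Hence
\[
I_1I_3I_2I_3 \;=\; I_1\cdot(I_3I_2I_3^{-1})
\]
is a product of two complex reflections, one fixing $C_1$ and one fixing $I_3(C_2)$. Its type is governed by the linking function
\[
L(v_1, v'_2) \;=\; |\la v_1, v'_2\ra|^2 - \la v_1, v_1\ra \la v'_2, v'_2\ra
\]
of \eqref{eq:linked-circle}, with $v_1 := n_1$: the product is elliptic iff $C_1$ and $I_3(C_2)$ intersect inside $\HdC$, equivalently $L(v_1, v'_2)<0$; parabolic iff they are asymptotic, i.e.\ $L=0$; and loxodromic iff they are ultraparallel, i.e.\ $L>0$.

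Because $I_3\in\Su$ preserves the Hermitian form, $\la v'_2, v'_2\ra = \la n_2, n_2\ra$, so the sign of $L$ depends only on $|\la n_1, I_3 n_2\ra|^2$ compared with $\la n_1,n_1\ra \la n_2,n_2\ra$. Substituting $b=1/2$, $d^2=(4a^2-3)/4$ and $2a\cos\theta=1/2-2a^2$ from \eqref{eq:abd} into \eqref{mtrixs:I3} and computing the pairing collapses $L$ to a simple algebraic function of $a$. I expect this function to vanish precisely at $a=1$ in the parameter range $[\sqrt{3}/2,(\sqrt{2}+1)/2]$, to be negative for $a<1$, and positive for $a>1$; the latter can be verified directly at the $\R$-Fuchsian endpoint $a=(\sqrt{2}+1)/2$, where the whole group lies in $SO(2,1)$ and the loxodromic type of $I_1I_3I_2I_3$ can be read off from the real-hyperbolic $(3,3,4)$-triangle group.

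The main obstacle is purely computational: the pairing $\la n_1, I_3 n_2\ra$ must be simplified using \eqref{eq:abd} so that the factorisation $L(a) \propto (a-1)\cdot Q(a)$ with $Q$ of constant sign on the interval becomes visible. This should be short because $n_1$ and $n_2$ are very sparse vectors, so $\la n_1, I_3 n_2\ra$ picks out essentially one entry of $I_3$ times a small linear combination. A minor bookkeeping care is that \eqref{mtrixs:I3} is written in one specific convention for the Hermitian form, so the pairing must be computed consistently with that convention; the modulus $|\la n_1, I_3 n_2\ra|^2$ entering $L$ is however insensitive to that choice.
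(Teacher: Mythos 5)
Your proposal is correct, and it takes a genuinely different route from the paper. The paper simply computes $\tr(I_1I_3I_2I_3)$ directly, simplifies it with \eqref{eq:abd} to $4a^2-1$, and declares that ellipticity is equivalent to $\tr<3$. You instead write $I_1I_3I_2I_3=I_1\cdot(I_3I_2I_3^{-1})$ as a product of two complex involutions and classify it by the mutual position of the mirrors $C_1$ and $I_3(C_2)$ via the sign of $L(n_1,I_3n_2)$. The two arguments are in fact algebraically equivalent: for complex involutions with polar vectors $c_1,c_2$ one has
\begin{equation*}
\tr(\iota_1\iota_2)\;=\;-1+4\,\frac{\abs{\la c_1,c_2\ra}^2}{\la c_1,c_1\ra\la c_2,c_2\ra},
\qquad\text{so}\qquad
\tr(\iota_1\iota_2)-3\;=\;\frac{4\,L(c_1,c_2)}{\la c_1,c_1\ra\la c_2,c_2\ra},
\end{equation*}
and with $n_1=(0,1,0)^T$, $n_2=(1/\sqrt2,1/\sqrt2,0)^T$ your pairing collapses to $\abs{\la n_1,I_3n_2\ra}^2=\tfrac12\abs{ae^{-i\theta}-1/2}^2=a^2$ after using $b=1/2$, $d^2=(4a^2-3)/4$ and $a\cos\theta=1/4-a^2$, giving $L=a^2-1$ exactly as you predicted. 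What your route buys is a cleaner ``if and only if'': the bare criterion ``elliptic iff $\tr<3$'' used in the paper is only valid for real traces that stay above $-1$ (a real trace below $-1$ gives a loxodromic element), a point the paper leaves implicit; in your picture this is automatic, since the trichotomy intersecting/asymptotic/ultraparallel mirrors is exhaustive for products of complex involutions. What the paper's route buys is brevity: one trace computation and no geometric lemma. The only thing missing from your write-up is that you state the final factorisation as an expectation rather than carrying it out; since it does reduce to $L=a^2-1$ in two lines, this is a presentational rather than a mathematical gap.
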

\begin{proof}
	We compute the trace of $I_1I_3I_2I_3$  directly and have that
\begin{eqnarray*}
	&&tr(I_1I_3I_2I_3)	\\
&=&-8ab(a^2-b^2-d^2)\cos(\theta)+a^4+a^2(2b^2-2d^2)+b^4+6b^2d^2+d^4\\
&=& 4a^2-1.
\end{eqnarray*}
The condition that $I_1I_3I_2I_3$ is elliptic is equivalent to $tr(I_1I_3I_2I_3)<3$.
\end{proof}
So our parameter space for $\left\langle I_1, I_2,I_3\right\rangle $ with $I_1I_3I_2I_3$ nonelliptic is given by
\begin{equation}\label{eq:a}
1\leq a\leq  (\sqrt{2}+1)/2.
\end{equation} 

To make the computation simpler,  we  write $a=\frac{1}{2\sqrt{1-2t}}$.  Thus the  parameter space for the triangle group  $\Delta(3,3,4)$ becomes
\begin{equation}\label{eq:a}
	3/8\leq t\leq  \sqrt{2}-1
\end{equation} with the new  parameter $t$.

Most calculations are carried out in the Siegel model. From now on, we will work on this model. 

It is convenient to introduce some notations that are used throughout the paper.  We define
\begin{equation} 
\begin{aligned} 	
		&a(t)=\sqrt{6t-2},\\
		&b(t)=\sqrt{-t^2-2t+1},\\
		&c(t)=\sqrt{\frac{8t-3}{4t-1-2t\sqrt{6t-2}}}.	
\end{aligned} 		
\end{equation} 

 In the Siegel model,  the polar vectors $n_1, n_2$ and $n_3$  are given by 
\begin{equation*} \label{mtrixs:n1n2n3}
	n_1=\left[
	\begin{array}{c}
		0 \\
	1 \\
		0 \\
	\end{array}
	\right],\,\ n_2=\left[
	\begin{array}{c}
	1/2\\
	\sqrt{2}/2 \\
1/2\\
	\end{array}
	\right],\,\ n_3=	\frac{1}{2\sqrt{1-2t}}\left[
	\begin{array}{c}
		\frac{\sqrt{2}}{2}(a(t)+1) \\
		-t+ib(t) \\
		\frac{\sqrt{2}}{2}(a(t)-1) \\
	\end{array}
	\right].
\end{equation*}

The corresponding complex reflections $I_1, I_2$ and $I_3$ are given by the matrices
\begin{equation*} \label{mtrixs:I1I2I3}
	I_1=\left[
	\begin{array}{ccc}
		-1 & 0 & 0 \\
		0 & 1 & 0 \\
		0 & 0 & -1 \\
	\end{array}
	\right],\quad I_2=\left[
	\begin{array}{ccc}
		-1/2 & \sqrt{2}/2 & 1/2 \\
		\sqrt{2}/2 & 0 & \sqrt{2}/2 \\
		1/2 & \sqrt{2}/2 & -1/2 \\
	\end{array}
	\right],
\end{equation*} 
and 
\begin{equation*} \label{mtrixs:I1I2I3}
 I_3=\left[
	\begin{array}{ccc}
		-\frac{1}{4} &\frac{ \sqrt{2}(1+a(t))(t+i b(t))}{8t-4} & \frac{1-6t-2a(t)}{8t-4} \\
	\frac{	\sqrt{2}(a(t)-1)(-t+i b(t))}{8t-4} & 	-\frac{1}{2} &	\frac{\sqrt{2}(1+a(t))(-t+i b(t))}{8t-4} \\
		\frac{1-6t+2a(t)}{8t-4} & 	-\frac{\sqrt{2}(a(t)-1)(t+i b(t))}{8t-4} & -\frac{1}{4} \\
	\end{array}
	\right],
\end{equation*} 
respectively.

\section{The Dirichlet domain of the  triangle group  $\Delta(3,3,4)$}
\subsection{The Dirichlet domain}
For the convenience of the reader we recall the construction of the Dirichlet domain of the  triangle group  $\Delta(3,3,4)$ from \cite{pwx} without proof.
The notations used here differ slightly from the notations used in \cite{pwx}.

For $k\in \Z, 1\leq k\leq 8$,  the involution $A_k$ is denoted by $$(I_2I_1)^{(k-1)/2}I_3(I_1I_2)^{(k-1)/2}$$  if $k$ is  an odd integer and $$(I_2I_1)^{(k-2)/2}I_2I_3I_2(I_1I_2)^{(k-2)/2}$$ if $k$ is   even.
One may take the index $k$ mod 8.
 Let $p_0$ be the fixed point of $I_2I_1$ in $\HdC$.  The bisector $\B_k$ is defined
to be the bisector equidistant from $p_0$ and $A_k(p_0)$. We define a polyhedron $D$ bounded by sides contained in these eight bisectors.

The combinatorial configuration of the
bisectors  as $t$ decreases from $\sqrt{2}-1$ to $3/8$  are described as follows.

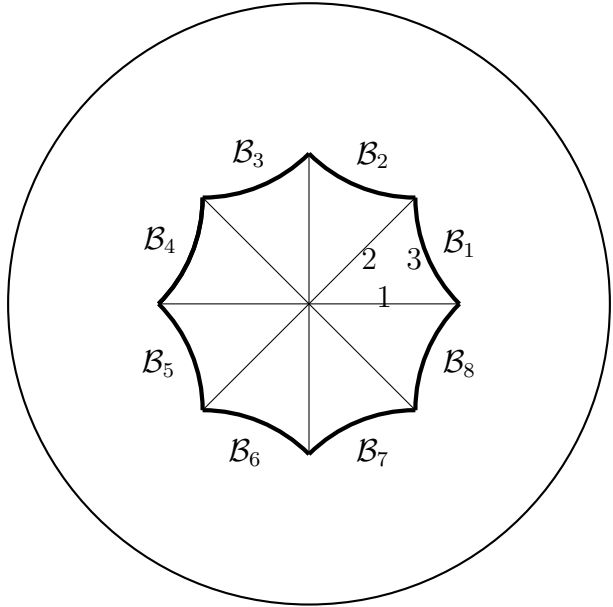
\begin{figure}
	\begin{center}
		\begin{tikzpicture}
		\draw [ thick] (0,0) circle [radius=4];;
		\draw (-2,0) -- (2,0);
		\draw (0,-2) -- (0,2);
		\draw (-1.414,-1.414) -- (1.414,1.414);
		\draw (-1.414,1.414) -- (1.414,-1.414);
		\draw [ultra thick]  (1.414,1.414) arc [radius=2, start angle=180, end angle=225];
		\draw [ultra thick]  (-1.414,1.414) arc [radius=2, start angle=270, end angle=315];
		\draw [ultra thick]  (0,2) arc [radius=2, start angle=225, end angle=270];
		\draw [ultra thick]  (-2,0) arc [radius=2, start angle=315, end angle=360];
		\draw [ultra thick]  (-2,0) arc [radius=2, start angle=315, end angle=360];
		\draw [ultra thick]  (-1.414,-1.414) arc [radius=2, start angle=0, end angle=45];
		\draw [ultra thick]  (0,-2) arc [radius=2, start angle=45, end angle=90];
		\draw [ultra thick]  (1.414,-1.414) arc [radius=2, start angle=90, end angle=135];
		\draw [ultra thick]  (2,0) arc [radius=2, start angle=135, end angle=180];
	\node at (2,0.8){$\B_1$};	
	\node at (0.8485281372, 1.979898987){$\B_2$};
	\node at (-0.8, 2){$\B_3$};
	\node at (-1.979898987, 0.8485281372){$\B_4$};
	\node at (-2,- 0.8){$\B_5$};
	\node at (-0.8485281372,-1.979898987){$\B_6$};
	\node at (0.8485281372, -1.979898987){$\B_7$};
	\node at (2,-0.8){$\B_8$};
	\node at (1,0.1){$1$};
	\node at (0.8,0.6){$2$};
	\node at (1.4,0.6){$3$};
	\end{tikzpicture}
	\end{center}
	\caption{A schematic view of Dirichlet domain of the  triangle group  $\Delta(3,3,4)$ in the ball model.}
	\label{fig:DM}
\end{figure}
 
 \begin{prop} [Parker-Wang-Xie \cite{pwx}] \label{prop:eight-bisectors}
 Let $\B_k$ be defined as above. Suppose that  $3/8\leq t\leq \sqrt{2}-1$.
 Then for each $k\in \Z/8\Z$:
 \begin{enumerate} 
 	\item The bisector $\B_k$ intersects $\B_{k\pm 1}$ in a Giraud disc. The Giraud disc is preserved by $A_{k}A_{k\pm 1}$, which has order 3.
 	\item The intersection of  $\B_k$ with $\B_{k\pm 2}$ is contained in the halfspace bounded by $\B_{k\pm 1}$ not containing $p_0$.
 	\item The bisector $\B_k$ does not intersect $\B_{k\pm \iota}$ for $3\leq \iota\leq 4$. Moreover, the boundaries of these bisectors are disjoint except for $\iota=3$ and $t=3/8$, in which case the boundaries
 	intersect in a single point, which is a parabolic fixed point.
\end{enumerate} 	
 \end{prop}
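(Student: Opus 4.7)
The plan is to exploit the large symmetry group of the configuration to cut the number of cases to a minimum, and then verify each representative assertion by direct matrix computation using the generators from Section \ref{section:334representation}.

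First I would establish the symmetry. The element $J=I_2I_1$ fixes $p_0$ and has order $4$. A direct check from the definitions shows that conjugation by $J$ sends $A_k$ to $A_{k+2\pmod 8}$, so $J$ cyclically permutes the bisectors within each parity; an additional involution coming from $I_2$ satisfies $I_2 A_k I_2 = A_{3-k\pmod 8}$ and swaps the two parities. Altogether the stabilizer of $p_0$ in $\Delta(3,3,4)$ acts dihedrally on $\{\B_k\}_{k=1}^{8}$, and it suffices to verify each claim on a few representative pairs, say $(\B_1,\B_2)$ for (1), $(\B_1,\B_3)$ for (2), and $(\B_1,\B_4)$, $(\B_1,\B_5)$ for (3).

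For (1), recall from Section \ref{sec:background} that $\B_k\cap\B_{k+1}$ is a Giraud disc exactly when the three lifts of $p_0,\,A_k(p_0),\,A_{k+1}(p_0)$ are linearly independent in $\mathbb{C}^{2,1}$; nondegeneracy of their Gram determinant is a polynomial condition in $t$ that one checks holds on the full interval. An order-$3$ element preserving this disc is produced by simplification: for instance
\[
A_1 A_2 \;=\; I_3\,(I_2 I_3 I_2) \;=\; (I_3 I_2)^2,
\]
which has order $3$ by the $(3,3,4)$-relations, and the $J$-symmetry propagates the conclusion to the remaining adjacent pairs. Assertion (2) is the inequality $|\langle\mathbf{p},A_{k+1}\mathbf{p_0}\rangle|<|\langle\mathbf{p},\mathbf{p_0}\rangle|$ for every $p\in \B_k\cap\B_{k+2}$ away from the two neighbouring Giraud discs; an explicit parametrisation of $\B_1\cap\B_3$ by one spinal coordinate, substituted into the Hermitian form, reduces this to a one-variable polynomial inequality in $t$ that factors transparently.

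The main obstacle is (3), whose two halves must glue together exactly at $t=3/8$. For $\iota=4$ I would use the linking invariant $L$ of \eqref{eq:linked-circle} applied to the polar vectors of the $\C$-spines of $\B_1$ and $\B_5$: verifying $L>0$ throughout the parameter range rules out boundary intersection, and the standard non-linking criterion for bisectors then promotes this to disjointness in $\HdC$. For $\iota=3$ the key identity uses that $I_2 I_1 I_2$ is an involution, whence
\[
A_1 A_4 \;=\; I_3\,(I_2 I_1 I_2)\,I_3\,(I_2 I_1 I_2) \;=\; (I_3 I_2 I_1 I_2)^2.
\]
Hence $A_1 A_4$ is parabolic precisely when $I_3 I_2 I_1 I_2$ is parabolic, i.e.\ exactly at $t=3/8$, and is loxodromic for $t\in(3/8,\sqrt 2 - 1]$. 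The linking invariant between the $\C$-spines of $\B_1$ and $\B_4$ is then a real-analytic function of $t$ that vanishes at $t=3/8$ and is strictly positive on $(3/8,\sqrt 2 - 1]$; a single-variable monotonicity check along the explicit one-parameter family closes the argument, with the tangency at $t=3/8$ occurring at the unique parabolic fixed point of $I_3 I_2 I_1 I_2$.
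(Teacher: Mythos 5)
First, a point of comparison: the paper does not prove this proposition at all --- it is explicitly recalled from Parker--Wang--Xie \cite{pwx} ``without proof,'' so there is no in-paper argument to measure yours against. Judged on its own terms, your structural observations are correct and do reflect how such statements are actually proved: the dihedral symmetry generated by $I_2I_1$ and $I_2$ (your identities $JA_kJ^{-1}=A_{k+2}$ and $I_2A_kI_2=A_{3-k}$ check out against the paper's symmetry list), and the word identities $A_1A_2=(I_3I_2)^2$ (order $3$ from the $(3,3,4)$ relations) and $A_1A_4=(I_3I_2I_1I_2)^2$ (parabolic exactly at $t=3/8$) are exactly the right algebra and correctly locate the tangency in part (3).

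The genuine gap is in your mechanism for proving disjointness in part (3). The invariant $L$ of \eqref{eq:linked-circle} detects whether two \emph{complex geodesics} meet (equivalently, whether two $\C$-circles are linked); there is no ``standard non-linking criterion'' that upgrades disjointness of the complex spines of two bisectors to disjointness of the bisectors themselves. A bisector is the union of the complex slices over its real spine, and two bisectors can perfectly well intersect while their complex spines are disjoint, so positivity of $L$ for the spine polar vectors proves nothing about $\B_1\cap\B_5$ or about the spinal spheres. What is actually needed (and what \cite{pwx} does) is to show that the coequidistant intersection $\B(p_0,A_k(p_0))\cap\B(p_0,A_{k\pm\iota}(p_0))$ is empty --- e.g.\ by parametrizing the would-be Giraud intersection via the Gram data of the three points and showing the defining equations have no solution, or by proving the half-space containment $|\la \mathbf{p},A_{k\pm\iota}\mathbf{p_0}\ra|>|\la \mathbf{p},\mathbf{p_0}\ra|$ for all $p\in\B_k$. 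The boundary statement (spinal spheres disjoint except for a single tangency at the parabolic fixed point when $\iota=3$, $t=3/8$) likewise requires a separate analysis of the closures, not of the spines. Two smaller gaps: in (1) linear independence of the three lifts makes the intersection a Giraud disc only if it is nonempty, which you have not verified, and you assert rather than prove that $A_kA_{k\pm1}$ preserves that disc (one must check it suitably permutes the three equidistant points $p_0$, $A_k(p_0)$, $A_{k\pm1}(p_0)$, which is not immediate from the order-$3$ computation alone).
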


\paragraph{{\bf The symmetry for $D_t$}.}  For each $k$ mod 8 and each $n$ mod 4, we have
\begin{enumerate} 
 	\item $(I_2I_1)^{n}(\B_k)=\B_{2n+k}$;
 	\item $(I_2I_1)^{n}I_2(\B_k)=\B_{2n+3-k}$.
\end{enumerate}

 Furthermore, one can check that the side pairing maps $A_k$ for $D_t$
 satisfies the conditions of the Poincar\'e polyhedron theorem for coset 
 decomposition. Thus we have
 \begin{thm} [Parker-Wang-Xie \cite{pwx}]
 	Suppose  that $3/8\leq t\leq \sqrt{2}-1$. Let $D_t$ be the polyhedron in 
 $\HdC$ containing $p_0$ and bounded by the eight bisectors $\B_k$. Then $D_t$ is the fundamental polyhedron of triangle group  $\Delta(3,3,4)$.	
  \end{thm}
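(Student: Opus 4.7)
The plan is to invoke the Poincaré polyhedron theorem for coset decomposition, with finite stabilizer $\Gamma_{p_0}=\langle I_1,I_2\rangle$ of order $8$ acting on the eight candidate sides. Each $A_k$ is an involution conjugate to $I_3$ or to $I_2I_3I_2$, and by construction it exchanges the two open half-spaces cut out by $\B_k$, so the side of $D_t$ on $\B_k$ is self-paired by $A_k$. The symmetry relations $(I_2I_1)^n(\B_k)=\B_{2n+k}$ and $(I_2I_1)^n I_2(\B_k)=\B_{2n+3-k}$ show that $\Gamma_{p_0}$ permutes the eight sides transitively, so in the coset decomposition there is only a single side-orbit to analyse.

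Next I would verify the ridge/cycle conditions. By Proposition~\ref{prop:eight-bisectors}(1), the only codimension-two faces of $D_t$ are the Giraud discs $\B_k\cap\B_{k+1}$, each stabilized by $A_kA_{k+1}$ of order $3$, so the cycle element $(A_{k+1}A_k)^3$ is the identity. By the $\Gamma_{p_0}$-symmetry above it suffices to verify the local tiling at a single representative ridge, say $\B_1\cap\B_2$, where one confirms that the three translates $D_t$, $A_2(D_t)$, $A_2A_1(D_t)$ meet pairwise only along common sides and together cover a neighborhood of the Giraud disc in $\HdC$. Parts (2) and (3) of Proposition~\ref{prop:eight-bisectors} guarantee that no further bisector intersections contribute hidden ridges of $D_t$; the only exception, the ideal tangency of $\B_k$ with $\B_{k+3}$ that appears at $t=3/8$, produces a parabolic fixed point on $\partial\HdC$ and must be handled via the parabolic version of the theorem with a consistent horoball at the corresponding cusp.

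The main obstacle is the ridge-cycle verification at the Giraud discs. Since Giraud discs are not totally geodesic in $\HdC$ there is no intrinsic dihedral-angle computation available; one must show explicitly, using the Hermitian form and the defining equations of $\B_1$, $\B_2$, and their $A_2A_1$-images, that the three translates above form a local tiling. A secondary delicate point is uniformity in $t$ together with the parabolic limit at $t=3/8$, where the combinatorial structure at infinity changes and the classical Poincaré theorem must be upgraded to the cusped version with cocompact horospherical cross-section. Once these two points are settled, the Poincaré theorem yields simultaneously that $D_t$ is a fundamental polyhedron for $\Delta(3,3,4)$ and that the abstract presentation encoded by the cycle relations matches the given presentation of $T(3,3,4)$.
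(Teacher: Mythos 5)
Your plan follows essentially the same route as the paper: the paper states this theorem as a citation of Parker--Wang--Xie \cite{pwx} and gives no proof beyond the remark that the side-pairing maps $A_k$ satisfy the hypotheses of the Poincar\'e polyhedron theorem for coset decomposition, which is precisely the verification you outline (the dihedral stabilizer of $p_0$ permuting the eight sides, self-paired sides on each $\B_k$, order-three cycles at the Giraud ridges coming from Proposition \ref{prop:eight-bisectors}, and the parabolic degeneration at $t=3/8$). The detailed local-tiling and cusp checks you identify as the main obstacles are exactly what is deferred to \cite{pwx} rather than carried out here.
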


 Let $\Gamma_t$ be the even subgroup of the triangle group $\left\langle I_1, I_2,I_3\right\rangle $. Let 
 \begin{equation*} 
 	g_1=I_3I_2I_1I_2,\quad g_2=I_2I_1,\quad g_3=I_1I_2I_3I_2=g_2^{-1}g_1g_2.
 \end{equation*} 
Then  
 $$\Gamma_t=\left\langle g_1, g_2 \right\rangle. $$
Note that
$$g_3=g_2^{-1}g_1g_2, g_1=g_2g_1(g_2^{-1}g_3)(g_2g_1)^{-1}.$$
 
 For  $1\leq k\leq 8$, we have
 \begin{enumerate} 
 	\item $\B_k=\B\left(p_0, g_2^{(k-2)/2}g_3^{-1}(p_0)\right)$ if $k$ is even;
 	\item $\B_k=\B\left(p_0,  g_2^{(k-1)/2}g_1(p_0)\right)$ if $k$ is odd.
 \end{enumerate} 

\paragraph{{\bf The side-pairing maps}.} From above, it is easy to  check that $g_1$  maps the side on $\B_4$ to the side on  $\B_1$.  Side-pairing maps for other sides can be obtained from this one by symmetry.

 The Poincar\'e polyhedron theorem also shows that  $D_t$ is a fundamental 
 domain for the action of $\Gamma_t$ modulo the action of a cyclic group $\left\langle g_2\right\rangle $ of order 4.
 
 When $t=3/8$, the geometry of the group $\Gamma_{3/8}$ had been studied in \cite{der-fal}. It is the holonomy representation of a uniformizable  spherical CR structure on  the  figure-eight knot complement.

In order to study the manifold at infinity, ie the quotient of the domain of
discontinuity under the action of group.  The basis idea is to consider the 
intersection with $\partial\HdC$ of a fundamental domain for the action on 
$\HdC$.

The combinatorial structure of $\partial_{\infty} D_t=D_t\cap \partial\HdC$ is simple due to the   combinatorial structure of $D_t$. Let  $\mathcal{S}_i$ be the  spinal sphere corresponding to the bisector $\B_i$.   We define
$$ \mathcal{A}_i=\mathcal{S}_i \cap \partial_{\infty} D_t.$$
From Proposition \ref{prop:eight-bisectors}, it is easy to see that $\mathcal{A}_i$ is an annulus and  $\partial_{\infty} D_t$ is bounded by eight (pairwise isometric) annuluses.

\section{$\mbox{CR}$-spherical uniformizations for the $\R$-Fuchsian representation}  \label{section:Rfuchsian}
In this section, we just focus on the $\R$-Fuchsian representation.
Let  $t_0=\sqrt{2}-1$.  Then  $\Gamma_{t_0} \subset \mbox{\rm PO(2,1)}\subset\Pu $. 	Let $u_0=\sqrt{3\sqrt{2}-4}$ and 
$v_0=\sqrt{2\sqrt{2}-1}$.
The generators $g_1, g_2$ and $g_3$ are given  by the matrices

\begin{eqnarray*}	
	g_1&=&\left[
\begin{array}{ccc}
	\frac{3+4\sqrt{2}+6\sqrt{2}u_0+8u_0}{4} &\frac{2\sqrt{2}u_0+2u_0+2+\sqrt{2}}{4} & -\frac{1}{4} \\
	\frac{-2\sqrt{2}u_0-2u_0-2-\sqrt{2}}{4}& \frac{1}{2} &\frac{2\sqrt{2}u_0+2u_0-2-\sqrt{2}}{4} \\
	-\frac{1}{4}&\frac{2\sqrt{2}u_0+2u_0+2+\sqrt{2}}{4} & 		\frac{3+4\sqrt{2}+6\sqrt{2}u_0+8u_0}{4} \\
\end{array}
\right],\\	
g_2&=&\left[
\begin{array}{ccc}
	1/2 & \sqrt{2}/2 & -1/2 \\
	-\sqrt{2}/2 & 0 & -\sqrt{2}/2 \\
	-1/2 & \sqrt{2}/2 & 1/2 \\
\end{array}
\right],\\
g_3&=&\left[
\begin{array}{ccc}
	\frac{3+2\sqrt{2}}{4} &\frac{\sqrt{2}+2+6u_0+2\sqrt{2}u_0}{4} & \frac{-1-2\sqrt{2}-4u_0-2\sqrt{2}u_0}{4} \\
	\frac{-\sqrt{2}+2+6u_0+2\sqrt{2}u_0}{4} & 	\frac{1+2\sqrt{2}}{2} &\frac{\sqrt{2}+2+6u_0+2\sqrt{2}u_0}{4} \\
\frac{-1-2\sqrt{2}-4u_0-2\sqrt{2}u_0}{4} & 		\frac{\sqrt{2}+2+6u_0+2\sqrt{2}u_0}{4} & 	\frac{3+2\sqrt{2}}{4} \\
\end{array}
\right].
\end{eqnarray*}


Now $g_1$ is a loxodromic element in $\Pu$. Let $p_1$ and $p_2$ be the attractive and repulsive fixed points of $g_1$. We denote by $\alpha_1$  the arc of $\C$-circle
$\arc {p_1}{p_2}$. Then $\alpha_1$ is the axis at infinity of  $g_1$.  Let $\Lambda_{t_0} $ be the limit set of  $\Gamma_{t_0}$. Then it is a round circle. The crown associated to $g_1$ is the subset of $\mathbb{S}^3$ defined as 
$${\rm Crown}_{\Gamma_{t_0},g_1}= \Lambda_{t_0} \cup\Bigl(\bigcup_{g\in\Gamma_{t_0}} g\cdot\alpha_1\Bigr).$$
We denote $\Omega_{\Gamma_{t_0},g_1}\subset \Omega_{\Gamma_{t_0}}$ the complement of ${\rm Crown}_{\Gamma_{t_0},g_1}$ in $\mathbb{S}^3$. Dehornoy showed

	\begin{prop} [Dehornoy \cite{Dehornoy}] \label{thm:Ffuchsianfigure8} $\Omega_{\Gamma_{t_0},g_1}/\Gamma_{t_0}$ is homeomorphic to  the  figure-eight knot complement.
\end{prop}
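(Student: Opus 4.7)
The plan is to exploit the $\R$-Fuchsian structure to reduce everything to two-dimensional hyperbolic geometry, identify the drilled locus with a single closed orbit of the geodesic flow on the unit tangent bundle of the $(3,3,4)$-orbifold, and then invoke a topological identification.

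First, I would set up the tangent bundle picture. Since $\Gamma_{t_0}\subset \mathrm{PO}(2,1)$ preserves the Lagrangian plane $\HdR\subset\HdC$, its limit set $\Lambda_{t_0}$ coincides with the boundary $\R$-circle $\partial\HdR$. The classical $\mathrm{PO}(2,1)$-equivariant fibration
\[
\partial\HdC\setminus\partial\HdR\;\longrightarrow\;\HdR,
\]
whose fibers are the arcs of $\C$-circles meeting $\HdR$ orthogonally, identifies $\Omega_{\Gamma_{t_0}}$ with the unit tangent bundle $T^1\HdR$. Descending to the quotient gives $\Omega_{\Gamma_{t_0}}/\Gamma_{t_0}\cong T^1\mathcal{O}$, where $\mathcal{O}=\HdR/\Gamma_{t_0}$ is the $(3,3,4)$-orbifold.

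Second, I would interpret the crown in these terms. The element $g_1$ lies in $\mathrm{PO}(2,1)$ and is loxodromic; its fixed points $p_1,p_2$ lie on $\partial\HdR$ and are the endpoints of its hyperbolic axis in $\HdR$. The unique $\C$-circle through $p_1,p_2$ meets $\partial\HdR$ precisely in $\{p_1,p_2\}$, so it is cut into two arcs by $\partial\HdR$; the preferred one $\alpha_1$ is exactly the image under the fibration of the unit tangent field along the axis of $g_1$. Hence the union $\bigcup_{g\in\Gamma_{t_0}}g\cdot\alpha_1$ descends to the single closed orbit $\gamma^{\sharp}\subset T^1\mathcal{O}$ of the geodesic flow associated to the primitive closed geodesic represented by the conjugacy class of $g_1$, and
\[
\Omega_{\Gamma_{t_0},g_1}/\Gamma_{t_0}\;\cong\;T^1\mathcal{O}\setminus\gamma^{\sharp}.
\]

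Third, I would identify this complement with the figure-eight knot complement; this final step is the substantive one and the main obstacle, the first two being essentially formal. One approach is the Birkhoff section method: construct an explicit surface transverse to the geodesic flow on $T^1\mathcal{O}$ with boundary on $\gamma^{\sharp}$ meeting every orbit, read off its combinatorics from the $\Gamma_{t_0}$-translates of the axis of $g_1$ in a fundamental polygon for $\Gamma_{t_0}$ on $\HdR$, and match the resulting genus-one template with the standard Birkhoff section of the figure-eight knot complement. An alternative uses the Dirichlet polyhedron $D_{t_0}$ from the preceding section: locate the eight arcs $\partial_{\infty}D_{t_0}\cap{\rm Crown}_{\Gamma_{t_0},g_1}$ explicitly, and verify by inspection of the side-pairing pattern on $\partial_{\infty}D_{t_0}\setminus{\rm Crown}_{\Gamma_{t_0},g_1}$ that the quotient is the figure-eight knot complement. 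Either route requires constructing a concrete topological bridge between the $\R$-Fuchsian geometry and a standard model of the figure-eight knot complement, which is where the real work lies.
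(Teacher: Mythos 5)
Your proposal never actually proves the statement. The paper offers no proof of this proposition at all: it is quoted directly from Dehornoy's paper (whose title is precisely ``Le compl\'ementaire du huit dans $T^{1}\Sigma_{0;3,3,4}$''), and the whole content of that citation is exactly your Step 3. Your Steps 1 and 2 are correct and match the framing already present in the surrounding text: for an $\R$-Fuchsian group the quotient $\Omega_{\Gamma_{t_0}}/\Gamma_{t_0}$ is the unit tangent bundle of the $(3,3,4)$-orbifold, and the crown descends to the closed orbit of the geodesic flow over the closed geodesic of $g_1$ --- the paper itself records this as Falbel--Guilloux--Will's identification of $\Omega_{\Gamma_{t_0},g_1}/\Gamma_{t_0}$ with the drilled unit tangent bundle. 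But these two steps only reduce the proposition to the assertion that $T^1\mathcal{O}\setminus\gamma^{\sharp}$ is the figure-eight knot complement, which is the entire substance of the result. You acknowledge this is ``where the real work lies'' and then do not do it: neither the Birkhoff-section/template computation nor the matching with a standard model of the figure-eight knot complement is carried out, so no proof has been given.

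A further caution about your proposed alternative route via the Dirichlet polyhedron: in this paper that argument runs in the opposite direction. Proposition \ref{prop:sidepairRfuchsian} shows the side-pairing quotient of $\partial_{\infty}D_{t_0}-(\cup^4_{i=1}(\hat{\alpha_i}\cup \hat{\beta_i}))$ is homeomorphic to $\Omega_{\Gamma_{t_0},g_1}/\Gamma_{t_0}$ and then \emph{invokes} Dehornoy's theorem to conclude it is the figure-eight knot complement. To use the fundamental domain as a proof of the present proposition you would have to recognize the figure-eight knot complement directly from the gluing of a solid torus minus eight arcs --- e.g.\ by exhibiting an ideal triangulation or a Heegaard-type description and comparing with a standard presentation --- and that identification is nontrivial combinatorial work that your proposal does not attempt. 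As written, the proposal is a correct reduction plus a pointer to where a proof might be found, not a proof.
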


 We will  reinterpret Proposition \ref{thm:Ffuchsianfigure8} by using the fundamental domain.

Note that $g_2^{-1}g_3$ is also a loxodromic element in $\Pu$. Let $q_1$ and $q_2$ be the attractive and repulsive fixed points of  $g_2^{-1}g_3$. We denote by $\beta_1$  the arc of $\C$-circle $\arc {q_1}{q_2}$. Then $\beta_1$ is the axis at infinity of  $g_2^{-1}g_3$.  Define
  $$\alpha_i=g_2^{i}(\alpha_1),\quad \beta_i=g_2^{i}(\beta_1),$$
for $i=2,3,4.$

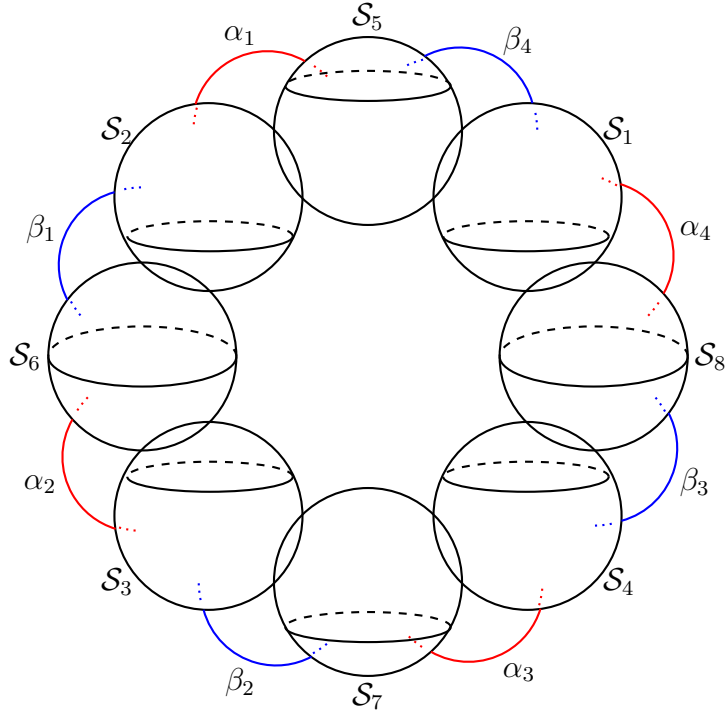
\begin{figure}
	\begin{center}
		\begin{tikzpicture}
		\draw[thick] (3,0) circle (1.25);
		\draw[thick] (-3,0) circle (1.25);
		\draw[thick] (0,3) circle (1.25);
		\draw[thick] (0,-3) circle (1.25);
		\draw[thick] (2.121,2.121) circle (1.25);
		\draw[thick] (-2.121,2.121) circle (1.25);
		\draw[thick] (-2.121,-2.121) circle (1.25);
		\draw[thick] (2.121,-2.121) circle (1.25);
		
		\draw[thick] (-4.25,0) arc [start angle=180, end angle=360,x radius=1.25, y radius=0.4];
		\draw[thick,dashed] (-1.75,0) arc [start angle=0, end angle=180,x radius=1.25, y radius=0.4];
		\draw[thick] (1.75,0) arc [start angle=180, end angle=360,x radius=1.25, y radius=0.4];
		\draw[thick,dashed] (4.25,0) arc [start angle=0, end angle=180,x radius=1.25, y radius=0.4];

			\draw[thick] (-1.1,3.6) arc [start angle=180, end angle=360,x radius=1.1, y radius=0.2];
		\draw[thick,dashed] (1.1,3.6) arc [start angle=0, end angle=180,x radius=1.1, y radius=0.2];
		
			\draw[thick] (-1.1,-3.6) arc [start angle=180, end angle=360,x radius=1.1, y radius=0.2];
		\draw[thick,dashed] (1.1,-3.6) arc [start angle=0, end angle=180,x radius=1.1, y radius=0.2];
		
			\draw[thick] (-3.2,1.6) arc [start angle=180, end angle=360,x radius=1.1, y radius=0.2];
		\draw[thick,dashed] (-1,1.6) arc [start angle=0, end angle=180,x radius=1.1, y radius=0.2];
		
			\draw[thick] (-3.2,-1.6) arc [start angle=180, end angle=360,x radius=1.1, y radius=0.2];
		\draw[thick,dashed] (-1,-1.6) arc [start angle=0, end angle=180,x radius=1.1, y radius=0.2];
		
		\draw[thick] (1,1.6) arc [start angle=180, end angle=360,x radius=1.1, y radius=0.2];
		\draw[thick,dashed] (3.2,1.6) arc [start angle=0, end angle=180,x radius=1.1, y radius=0.2];
		\draw[thick] (1,-1.6) arc [start angle=180, end angle=360,x radius=1.1, y radius=0.2];
		\draw[thick,dashed] (3.2,-1.6) arc [start angle=0, end angle=180,x radius=1.1, y radius=0.2];

		\draw[ thick, red] (-0.8371127315,3.928300746) arc [start angle=60, end angle=163, radius=1];
	  \draw[thick, red, dotted] (-0.5427712259,3.724570095) arc [start angle=45, end angle=60, radius=1];
	  \draw[thick, red,dotted] (-2.275,3.309835018) arc [start angle=163, end angle=178, radius=1];

	\draw[ thick, blue] (-3.369656184,2.185800006) arc [start angle=105, end angle=208, radius=1];
	\draw[thick, blue, dotted] (-3.017465985,2.249871556) arc [start angle=90, end angle=105, radius=1];
	\draw[thick, blue,dotted] (-3.949074712,0.7317388584) arc [start angle=208, end angle=223, radius=1];
	
		\draw[ thick, red] (-3.928300743,-0.8371127312) arc [start angle=150, end angle=253, radius=1];
		\draw[thick, red, dotted] (-3.724570093, -0.5427712258) arc [start angle=135, end angle=150, radius=1];
		\draw[thick, red,dotted] (-3.309835016, -2.274999999) arc [start angle=253, end angle=268, radius=1];
		
		\draw[ thick, blue] (-2.185800004, -3.369656182) arc [start angle=195, end angle=298, radius=1];
	\draw[thick, blue, dotted] (-2.249871555, -3.017465983) arc [start angle=180, end angle=195, radius=1];
	\draw[thick, blue,dotted] (-0.7317388577, -3.949074710) arc [start angle=298, end angle=313, radius=1];
		
		\draw[ thick, red] (0.8371127312, -3.928300741) arc [start angle=240, end angle=343, radius=1];
			\draw[thick, red, dotted] (0.5427712251, -3.724570091) arc [start angle=225, end angle=240, radius=1];
		\draw[thick, red,dotted] (2.274999998, -3.309835014) arc [start angle=343, end angle=358, radius=1];
		
		\draw[ thick, blue ] (3.369656180, -2.185800003) arc [start angle=285, end angle=388, radius=1];
		\draw[thick,  blue, dotted] (3.017465981, -2.249871554) arc [start angle=270, end angle=285, radius=1];
		\draw[thick,  blue,dotted] (3.949074708, -0.731738857) arc [start angle=388, end angle=403, radius=1];
		
		\draw[ thick, red ] (3.928300739,0.8371127305) arc [start angle=330, end angle=433, radius=1];
		\draw[thick,  red, dotted] (3.724570089,0.5427712244) arc [start angle=315, end angle=330, radius=1];
		\draw[thick,  red,dotted] (3.309835012,2.274999997) arc [start angle=418, end angle=433, radius=1];
		
	\draw[ thick, blue ] (2.185800002,3.369656179) arc [start angle=375, end angle=478, radius=1];
		\draw[thick,  blue, dotted] (2.249871553,3.017465980) arc [start angle=360, end angle=375, radius=1];
	\draw[thick,  blue,dotted] (0.7317388563,3.949074706) arc [start angle=463, end angle=478, radius=1];	
		
\node[above] at (0,4.2) {$\mathcal{S}_{5}$};
\node[left] at (-2.98,3) {$\mathcal{S}_{2}$};	
\node[left] at (-4.2,0) {$\mathcal{S}_{6}$};	
\node[left] at (-2.98,-3) {$\mathcal{S}_{3}$};
\node[below] at (0,-4.2) {$\mathcal{S}_{7}$};		
\node[right] at (2.98,-3) {$\mathcal{S}_{4}$};
\node[right] at (4.2,0) {$\mathcal{S}_{8}$};
\node[right] at (2.98,3) {$\mathcal{S}_{1}$};
\node[above] at (-1.7,4) {$\alpha_1$};
\node[left] at (-4,-1.7){$\alpha_2$};	
\node[right] at (1.65,-4.2){$\alpha_3$};	
\node[right] at (4,1.67){$\alpha_4$};	
\node[left] at (-4,1.7){$\beta_1$};	
\node[below] at (-1.7,-4) {$\beta_2$};	
\node[right] at (4,-1.67){$\beta_3$};	
\node[right] at (1.65,4.2){$\beta_4$};	
		\end{tikzpicture}
	\end{center}
	\caption{A schematic view of the configuration of the eight spinal spheres and the eight $\C$-arcs. Each round sphere is a spinal sphere, and $\partial_{\infty} D_{t}$ is the region outside all the spinal spheres.  $\alpha_i$($\beta_i$) is the thick red (blue) arc with end points in some of the spinal spheres.}
	\label{fig:configuration1}
\end{figure}

\begin{figure}[htbp]
	\centering
	\includegraphics[width=6cm]{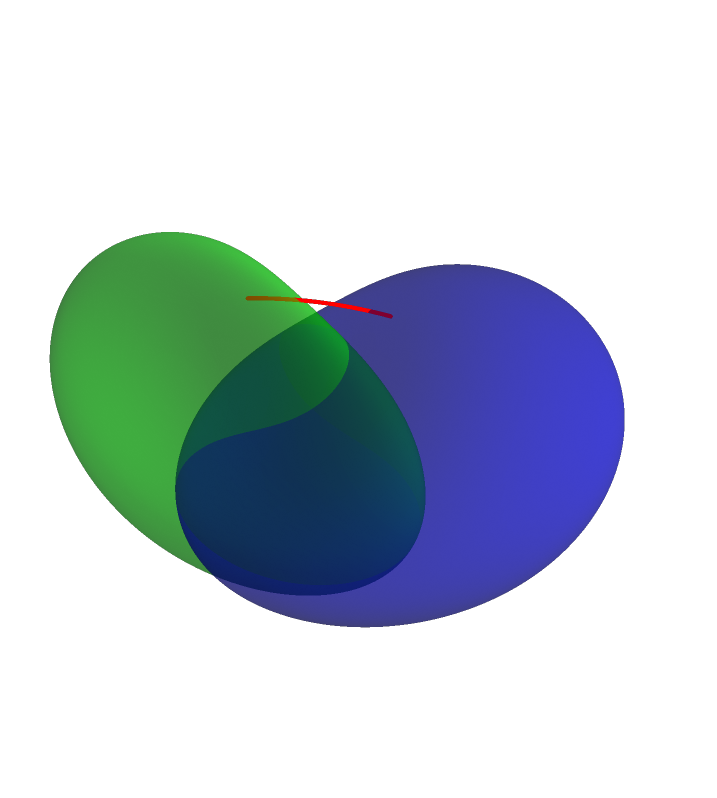}
	\caption{The $\C$-arc $\hat{\alpha_1}$ is the part of $\C$-arc $\alpha_1$, which is the axis  at infinity of $g_1$.  It lies in $\partial_{\infty} D_{t}$ with end points on the spinal spheres  $\mathcal{S}_5$(the green one) and $\mathcal{S}_2$(the blue one). }
	\label{fig:arc-alpha1}
\end{figure}

\begin{figure}[htbp]
	\centering
	\includegraphics[width=10cm]{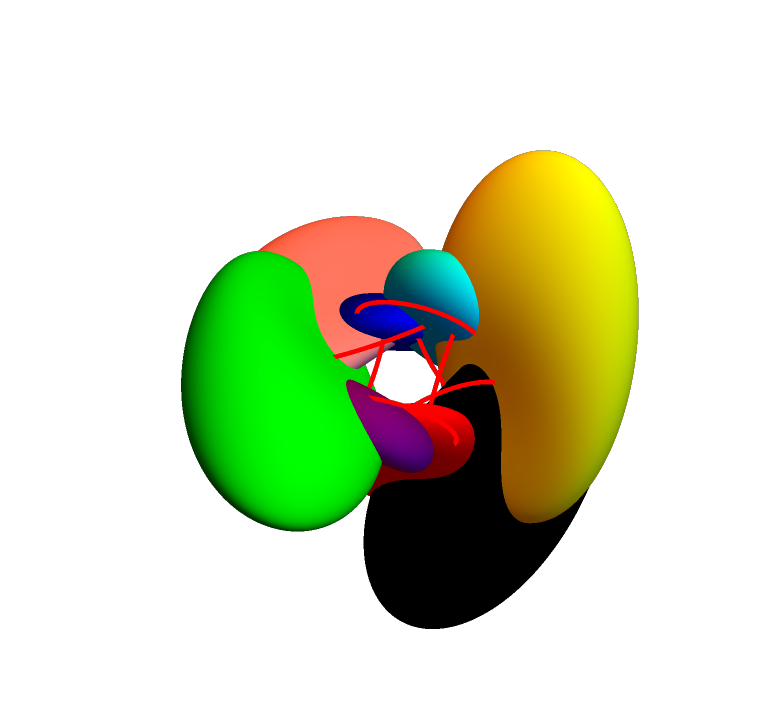}
	\caption{A realistic view of the configuration of the eight spinal spheres and the eight $\C$-arcs. }
	\label{fig:configuration2}
\end{figure}

See Figure 	\ref{fig:configuration1} for a  schematic view of the configuration of the eight spinal spheres and the eight $\C$-arcs. Figure 	\ref{fig:configuration1} should also be compared with Figure 	\ref{fig:configuration2}. 

\subsection{The configuration of the eight $\C$-arcs}
We will  study the intersections of arcs $\alpha_i, \beta_i$ with the spinal spheres $\mathcal{S}_i$.  Let $U_{t_0}=\partial_{\infty }D_{t_0} $. Then $U_{t_0}$ is a solid torus. We denote by $\hat{\alpha_i}, \hat{\beta_i}$ the segments of the arcs $\alpha_i, \beta_i$, which are contained in the interior of solid torus  $U_{t_0}$.  We define $\hat{\alpha_1}^{-}$($\hat{\alpha_1}^{+}$) to be the end point of $\hat{\alpha_1}$ close to the attractive fixed  point $p_1$(repulsive fixed  point $p_2$) of  $\alpha_1$, see Figure \ref{fig:arc-alpha1}. The end point of $\hat{\beta_1}$ can be defined
similarly. Note that 
$$\hat{\alpha_i}=g_2^{i}(\hat{\alpha_1}),\quad \hat{\beta_i}=g_2^{i}(\hat{\beta_1}),$$
for $i=2,3,4.$

\begin{prop}\label{prop:endpointalpha1}
	 The end point $\hat{\alpha_4}^{-}$ of  $\hat{\alpha_4}$ is on the spinal sphere $\mathcal{A}_1$, and the other end point $\hat{\alpha_4}^{+}$ of  $\hat{\alpha_4}$ is on the spinal sphere  $\mathcal{A}_8$.
\end{prop}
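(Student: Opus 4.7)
The plan is to prove the proposition by a direct computation in the Siegel model, combining the explicit matrices for $g_1, g_2$ displayed at the start of Section \ref{section:Rfuchsian} with the side-pairing symmetry $g_2(\B_k) = \B_{k+2}$ from Section \ref{section:334representation}.

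First, since $g_1 = I_3I_2I_1I_2$ is a loxodromic element lying in $\mathrm{PO}(2,1)$, its fixed points $p_1, p_2 \in \partial\HdC$ can be obtained as the eigenvectors of the displayed matrix for $g_1$ corresponding to eigenvalues $\lambda^{\pm 1}$ with real $\lambda>1$. Applying the appropriate power of $g_2$ transports this pair to the fixed-point pair of the conjugate loxodromic bounding $\alpha_4$. A polar vector for the $\C$-circle $C$ carrying $\alpha_4$ is then recovered from the orthogonality relations $\la \mathbf{v}, \mathbf{p}_j\ra = 0$ for $j=1,2$, and the formulas in Section \ref{subsection:affinedisk} convert this into a Heisenberg centre $(z_0, s_0)$ and radius $R$. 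Using the parametrization
\[
\gamma(\theta) = \bigl(z_0 + R e^{i\theta},\ s_0 + 2R\,\Im(\overline{z_0}\,e^{i\theta})\bigr),\qquad \theta\in[0,2\pi),
\]
the arc $\alpha_4$ is singled out as one of the two open $\theta$-intervals determined by the two fixed points, using the preferred-arc convention of the discussion preceding Figure \ref{fig:configuration1}.

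Second, I would compute the intersection of $\alpha_4$ with each spinal sphere $\mathcal{S}_k$. Each $\mathcal{S}_k$ is the locus
\[
F_k(\mathbf{p}) = \bigl|\la \mathbf{p},\mathbf{p}_0\ra\bigr|^{2} - \bigl|\la \mathbf{p}, A_k(\mathbf{p}_0)\ra\bigr|^{2} = 0,
\]
where $\mathbf{p}_0$ is the standard lift of the fixed point of $I_1I_2$. Substituting $\mathbf{p} = \psi(\gamma(\theta))$ produces, after clearing denominators, a quadratic equation in $\zeta := e^{i\theta}$ whose roots give the intersection points of $C$ with $\mathcal{S}_k$. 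For $k=1$ and $k=8$ I expect exactly one root on each spinal sphere to sit inside the annulus $\mathcal{A}_k$; for the remaining six spinal spheres, the non-linking criterion \eqref{eq:linked-circle} applied to the polar vectors of $C$ and $\mathcal{S}_k$ should preclude any further intersection of $\alpha_4$ with $\mathcal{S}_k$ along the relevant $\theta$-interval. These two distinguished points are then the first entry/exit points of $\alpha_4$ on $\partial U_{t_0}$, and the attractive/repulsive labelling $\hat\alpha_4^{\pm}$ is determined by which root lies closer to the corresponding fixed point of the underlying loxodromic.

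The main obstacle is combinatorial rather than strictly algebraic: among the two candidate roots of each quadratic on $\mathcal{S}_k$, one must select the one that in fact lies on the annulus $\mathcal{A}_k$ rather than the rest of the spinal sphere. This is done by comparing the $\theta$-value against the $\theta$-values at which $\gamma$ meets the neighbouring bisectors $\B_{k\pm 1}$, using Proposition \ref{prop:eight-bisectors} to know that these neighbouring bisector meetings are themselves transverse Giraud arcs. Conceptually the calculation is clean, but everything lives in the number field $\mathbb{Q}(\sqrt 2, u_0)$ with $u_0 = \sqrt{3\sqrt 2 - 4}$, so the expressions are bulky; the most economical route is to verify the analogous claim for $\hat\alpha_1$ directly (compatibly with Figure \ref{fig:arc-alpha1}) and then propagate the statement to $\hat\alpha_4$ via the $g_2$-symmetry $g_2(\B_k)=\B_{k+2}$, confining the heavy computation to a single representative arc.
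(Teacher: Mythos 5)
Your overall strategy---parametrize the $\C$-circle carrying $\alpha_4$, intersect it with each spinal sphere by substituting into the bisector equations, and then sort out combinatorially which intersection points lie on the annuli $\mathcal{A}_k$---is the same shape as the paper's argument, and your suggestions for obtaining the polar vector and for confining the heavy computation to one arc via the $g_2$-symmetry are fine. But there are two genuine gaps. First, you cannot use the non-linking condition \eqref{eq:linked-circle} to rule out intersections of $\alpha_4$ with a spinal sphere: that condition compares the polar vectors of two \emph{complex lines} and detects whether the lines meet in $\HdC$ (equivalently whether two $\C$-circles are linked); a spinal sphere is a $2$-sphere, not a $\C$-circle, has no polar vector in this sense, and a $\C$-circle can perfectly well cross a spinal sphere while being unlinked from any $\C$-circle naturally associated to the underlying bisector. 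The paper instead excludes $\mathcal{S}_2,\mathcal{S}_3,\mathcal{S}_6,\mathcal{S}_7$ by substituting the explicit parametrization of the circle into each bisector equation and checking that the resulting equation has no root in the admissible range (e.g.\ it finds $x\approx -1.0676$ for $\mathcal{S}_2$, outside $[-1,1]$).

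Second, and more seriously, your plan assumes that only $\mathcal{S}_1$ and $\mathcal{S}_8$ are relevant and that the ``remaining six'' spheres can be excluded wholesale. In fact $\alpha_4$ also crosses $\mathcal{S}_4$ and $\mathcal{S}_5$: its two ideal endpoints (the fixed points of the conjugate of $g_1$) lie inside $\mathcal{S}_1\cap\mathcal{S}_5$ and inside $\mathcal{S}_4\cap\mathcal{S}_8$ respectively. The entire content of the proposition is to decide whether $\hat{\alpha_4}$ terminates on $\mathcal{A}_1$ or on $\mathcal{A}_5$ (and on $\mathcal{A}_8$ or $\mathcal{A}_4$), and this is settled in the paper by checking that the point $\alpha_4\cap\mathcal{S}_5$ already lies \emph{inside} $\mathcal{S}_1$ (hence is not on $\mathcal{A}_5$) while the point $\alpha_4\cap\mathcal{S}_1$ lies outside every other spinal sphere (hence is on $\mathcal{A}_1$), and symmetrically for $\mathcal{S}_4$ versus $\mathcal{S}_8$. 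Your proposal never computes the crossings with $\mathcal{S}_4$ and $\mathcal{S}_5$ and never makes this comparison, so it cannot distinguish the correct configuration from the alternative in which the endpoint sits on $\mathcal{A}_5$; the conclusion $\hat{\alpha_4}^-\in\mathcal{A}_1$ is therefore not established by the argument as written.
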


\begin{proof}	
 Firstly, we claim that the $\C$-arc $\alpha_4$ is contained in the exterior of the spinal spheres $\mathcal{S}_i(1\leq i\leq 8)$ except for  $\mathcal{S}_1$, $\mathcal{S}_4$, $\mathcal{S}_5$ and  $\mathcal{S}_8$.

	Note that $\alpha_4$ is the $\C$-arc, which is the axis at infinity of $g_2^{-1}g_1g_2$. Let $v$ be the polar	vector of the $\C$-circle containing the arc $\alpha_4$. Then 
	\begin{equation*}
		v=g_2^{-1}\left(n_3\boxtimes I_2(n_1)\right)=\left[
		\begin{array}{c}
			\frac{9+4\sqrt{2}+(10\sqrt{2}+12)u_0}{7}\\
			0 \\
			1 \\
		\end{array}
		\right].
	\end{equation*}

The lift of the  $\C$-arc $\alpha_4$ can be written as 
	\begin{equation}\label{eq:alpha4}
 V_1=\left[
	\begin{array}{c}
		\frac{-9-4\sqrt{2}-(10\sqrt{2}+12)u_0}{7}\\
			\frac{\sqrt{(140\sqrt{2}+168)u_0+56\sqrt{2}+126}}{7}(x+yi) \\
		1 \\
	\end{array}
	\right],
\end{equation}
where $y=\sqrt{1-x^2},-1\leq x\leq 1$. 

We claim that  the intersection of $\alpha_4$ with  $\mathcal{S}_2$ is empty.
Substituting (\ref{eq:alpha4}) to the equation of the bisector $\mathcal{B}_2$
$$|\langle  V_1, q_0 \rangle|=|\langle  V_1, g_2g_1(q_0)\rangle|.$$
We get the equation
\begin{equation*}
	\begin{aligned} 
&(260\sqrt{7}u_0x+144\sqrt{14}u_0x+76\sqrt{7}x+54\sqrt{2})\sqrt{9+4\sqrt{2}+12u_0+10\sqrt{2}u_0}\\
	&-1154\sqrt{2}-2888u_0-2024\sqrt{2}u_0-1592=0.\\
	\end{aligned}
\end{equation*}

Solve the  equation, we have
$$x=\sqrt{\frac{4816\sqrt{2}-6755}{7}}\approx-1.0676.$$
Therefore,  $\alpha_4$ does not intersect with $\mathcal{S}_2$, which is the boundary at infinity of $\mathcal{B}_2$. 
With the same argument, one can also prove that $\alpha_4$ does not intersect with $\mathcal{S}_3$, $\mathcal{S}_6$ and $\mathcal{S}_7$.

Next, we study the intersection of  $\alpha_4$ with $\mathcal{S}_1$, $\mathcal{S}_4$, $\mathcal{S}_5$  and $\mathcal{S}_8$. The intersection point divide the arc $\alpha_4$
into several segments. We will determine which segment is $\hat{\alpha_4}$.

Substituting (\ref{eq:alpha4}) to the equation of the bisector $\mathcal{B}_1$
$$|\langle  V_1, q_0 \rangle|^2=|\langle  V_1, g_1(q_0)\rangle|^2.$$
We get
\begin{equation*}
\begin{aligned} 
	&(44\sqrt{7}u_0x+32\sqrt{14}u_0x+12\sqrt{7}x+10\sqrt{14}x)\sqrt{9+4\sqrt{2}+12u_0+10\sqrt{2}u_0}\\
	&-126\sqrt{2}-280u_0-168\sqrt{2}u_0-112=0.\\
\end{aligned}
\end{equation*}

The intersection point corresponds to the solution

$$
x=\sqrt{8\sqrt{2}-11},\quad y=2\sqrt{2}-2.
$$

Substituting (\ref{eq:alpha4}) to the equation of the bisector $\mathcal{B}_4$
$$|\langle  V_1, q_0 \rangle|^2=|\langle  V_1,  g_2^3g_1(q_0)\rangle|^2.$$

We get
\begin{equation*}
\begin{aligned} 
	&(172\sqrt{7}u_0x+120\sqrt{14}u_0x-52\sqrt{7}x-34\sqrt{14}x)\sqrt{9+4\sqrt{2}+12u_0+10\sqrt{2}u_0}\\
	&-1512u_0-1064\sqrt{2}u_0-840=0.\\
\end{aligned}
\end{equation*}

The intersection point corresponds to the solution

$$
x=-\frac{\sqrt{16\sqrt{2}+13}}{7},\quad y=\frac{4\sqrt{2}-2}{7}.
$$

Substituting (\ref{eq:alpha4}) to the equation of the bisector $\mathcal{B}_5$
$$|\langle  V_1, q_0 \rangle|^2=|\langle  V_1, g_3^{-1}(q_0)\rangle|^2.$$
We get
\begin{equation*}
\begin{aligned} 
	&(172\sqrt{7}u_0x+120\sqrt{14}u_0x+52\sqrt{7}x+34\sqrt{14}x)\sqrt{9+4\sqrt{2}+12u_0+10\sqrt{2}u_0}\\
	&-602\sqrt{2}-1512u_0-1064\sqrt{2}u_0-840=0.\\
\end{aligned}
\end{equation*}

The intersection point corresponds to the solution

$$
x=\frac{\sqrt{16\sqrt{2}+13}}{7},\quad y=\frac{4\sqrt{2}-2}{7}.
$$

Substituting (\ref{eq:alpha4}) to the equation of the bisector $\mathcal{B}_8$
$$|\langle  V_1, q_0 \rangle|^2=|\langle  V_1, g_2^{-1}g_3^{-1}(q_0)\rangle|^2.$$
We get
\begin{equation*}
	\begin{aligned} 
		&(44\sqrt{7}u_0x+32\sqrt{14}u_0x-12\sqrt{7}x-10\sqrt{14}x)\sqrt{9+4\sqrt{2}+12u_0+10\sqrt{2}u_0}\\
		&-280u_0-168\sqrt{2}u_0-112=0.\\
	\end{aligned}
\end{equation*}

The intersection point corresponds to the solution

$$
x=\sqrt{8\sqrt{2}-11},\quad y=2\sqrt{2}-2.
$$

By simple calculation,
it find that one end point  of  $\alpha_4$ lies inside  $\mathcal{S}_1$ and $\mathcal{S}_5$ and the other end point  of  $\alpha_4$ lies inside  $\mathcal{S}_4$ and $\mathcal{S}_8$. 
It is also easy to check that   the intersection point of  $\alpha_4$  with $\mathcal{S}_5$ lies in  $\mathcal{S}_1$ and  the intersection point of  $\alpha_4$  with $\mathcal{S}_1$ does not lie in any spinal sphere.
So this intersection point is on $\mathcal{A}_1$.

 we also see that the intersection point of  $\alpha_4$  with $\mathcal{S}_4$ lies in   $\mathcal{S}_8$ and  the intersection point of  $\alpha_4$  with $\mathcal{S}_8$ does not lie in any spinal sphere.
 So the intersection point is on $\mathcal{A}_8$.

From the configuration of spinal spheres, we can see that the segment  on $\alpha_4$ between the intersect points of  $\alpha_4$  with $\mathcal{A}_1$ and $\mathcal{A}_8$ is the $\C$-arc $\hat{\alpha_4}$ that we are looking for,
see Figure \ref{fig:alpha4-arc}.
\end{proof}

Similarly, we have 
\begin{prop} \label{prop:endpointbeta1}
	The end point $\hat{\beta_1}^{-}$ of  $\hat{\beta_1}$ is on the spinal sphere  $\mathcal{A}_2$, and the other end point $\hat{\beta_1}^{+}$ of  $\hat{\beta_1}$ is on the spinal sphere  $\mathcal{A}_6$.
\end{prop}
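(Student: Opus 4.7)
The plan is to follow exactly the strategy of the proof of Proposition \ref{prop:endpointalpha1}, replacing $\alpha_4$ by $\beta_1$, the axis at infinity of $g_2^{-1}g_3$. The first step is to compute an explicit polar vector $w$ of the $\C$-circle containing $\beta_1$. Since $g_2^{-1}g_3=g_2^{-2}g_1 g_2$, I can either diagonalize $g_2^{-1}g_3$ directly to read off its two boundary fixed points and then recover $w$ as a polar vector of the complex line they span, or transport by $g_2^{-1}$ a polar vector for the axis of the conjugate element $g_2^{-2}g_1 g_2^{2}$, in the spirit of the formula $v=g_2^{-1}\bigl(n_3\boxtimes I_2(n_1)\bigr)$ used in the previous proposition.

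With $w$ in hand I would write the analogue of the lift \eqref{eq:alpha4}, a parametrized lift $W_1(x,y)$ of a generic point of $\beta_1$ with $y=\sqrt{1-x^2}$ and $x\in[-1,1]$, and for each $k\in\{1,\dots,8\}$ substitute $W_1$ into the defining equation of the bisector $\mathcal{B}_k$. Squaring and simplifying reduces each to a univariate relation in $x$ with coefficients in $\mathbb Q(\sqrt 2,u_0)$, where $u_0=\sqrt{3\sqrt 2-4}$. In analogy with Proposition \ref{prop:endpointalpha1}, I expect $\beta_1$ to miss four of the eight spinal spheres (no root with $|x|\le 1$) and to meet the remaining four in unique points; exactly two of those four intersection points must lie outside all of the other spinal spheres, and those two are the endpoints of $\hat{\beta_1}$. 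Verifying that they lie on $\mathcal{A}_2$ and $\mathcal{A}_6$ specifically, and that the remaining two intersections are nested inside other spinal spheres (hence do not meet $\partial_\infty D_{t_0}$), pins down $\hat{\beta_1}$ as the subarc of $\beta_1$ joining them through $\partial_\infty D_{t_0}$.

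The main obstacle is the sheer bookkeeping: eight bisector equations with coefficients in the biquadratic field $\mathbb Q(\sqrt 2,u_0)$ must be solved, and the resulting intersection points compared against every other spinal sphere. The computational burden can be roughly halved by exploiting the involution $\tau=g_2^{-2}I_2$. Using $I_2 g_2 I_2=g_2^{-1}$ and $I_2 g_1 I_2=g_3^{-1}$, a short calculation shows that $I_2(\beta_1)=\beta_3=g_2^2(\beta_1)$, so $\tau$ fixes $\beta_1$ setwise and is an involution of $\HdC$; since $\tau$ permutes the spinal spheres by $\mathcal{S}_k\mapsto\mathcal{S}_{7-k}\pmod 8$, the pairs $(1,6)$, $(2,5)$, $(3,4)$, $(7,8)$ of intersection computations yield symmetric answers, leaving only four genuinely independent cases to resolve.
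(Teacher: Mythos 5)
Your proposal follows the same route the paper takes: the paper proves Proposition \ref{prop:endpointbeta1} only by the word ``Similarly,'' i.e.\ by repeating for $\beta_1$ the computation carried out in full for $\alpha_4$ in Proposition \ref{prop:endpointalpha1} (polar vector, parametrized lift, substitution into the eight bisector equations, then sorting out which intersection points are exposed on $\partial_\infty D_{t_0}$), which is exactly what you describe. Your extra symmetry observation via $\tau=g_2^{2}I_2$, which sends $\B_k$ to $\B_{7-k}$ and conjugates $g_2^{-1}g_3$ to its inverse, is a legitimate labour-saving refinement not present in the paper (modulo a harmless indexing slip: with the paper's convention $g_2^{2}(\beta_1)=\beta_2$, not $\beta_3$).
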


From the calculations in Proposition \ref{prop:endpointalpha1} and  Proposition \ref{prop:endpointbeta1}, we have

	\begin{equation*}
\hat{\alpha_4}^{-}=\left[
	\begin{array}{c}
\frac{-9-4\sqrt{2}-12u_0-10\sqrt{2}u_0}{7}\\
	2-\sqrt{2}+2u_0+\frac{4v_0-6\sqrt{2}v_0-4u_0v_0-8\sqrt{2}u_0v_0}{7}i \\
		1 \\
	\end{array}
	\right],
\end{equation*}

\begin{equation*}
	\hat{\alpha_4}^{+}=\left[
	\begin{array}{c}
	\frac{-9-4\sqrt{2}-12u_0-10\sqrt{2}u_0}{7}\\
	-2+\sqrt{2}-2u_0+\frac{4v_0-6\sqrt{2}v_0-4u_0v_0-8\sqrt{2}u_0v_0}{7}i \\
		1 \\
	\end{array}
	\right],
\end{equation*}

\begin{equation*}
	\hat{\beta_1}^{-}=\left[
	\begin{array}{c}
	18\sqrt{2}u_0+26u_0-9\sqrt{2}-13\\
	\frac{(32530+24631\sqrt{2})u_0-12172-15731\sqrt{2}i\left((9826+6664\sqrt{2})u_0+2884\sqrt{2}+5208\right)\sqrt{(1-2u_0)(2+3\sqrt{2})}}{6689}\\
		1 \\
	\end{array}
	\right],
\end{equation*}

\begin{equation*}
	\hat{\beta_1}^{-}=\left[
\begin{array}{c}
	18\sqrt{2}u_0+26u_0-9\sqrt{2}-13\\
	\frac{(21246+16105\sqrt{2})u_0-9540-11181\sqrt{2}i\left((3006+2372\sqrt{2})u_0+1076\sqrt{2}+1128\right)\sqrt{(1-2u_0)(2+3\sqrt{2})}}{4657}\\
	1 \\
\end{array}
\right].
\end{equation*}

Under the action of $g_2$, we can obtain the end points of all $\C$-arcs  $\hat{\alpha_i}^{-}$ and  $\hat{\beta_i}^{-}$. We summarize these  in Table  \ref{tab:isometric spheres}. 

\begin{table}[!htbp]
		\caption{The positions and the coordinates of the end points of the eight arcs.}
	\centering
	\begin{tabular}{c|c}
		\toprule
		\textbf{$\C$-arc} & \textbf{End points  }\\
		\midrule
		$\hat{\alpha_1}$&$\hat{\alpha_1}^{-}\in \mathcal{A}_2$, $\hat{\alpha_1}^{+}\in \mathcal{A}_5$\\
		\hline
			$\hat{\alpha_2}$&$\hat{\alpha_2}^{-}\in \mathcal{A}_3$,  $\hat{\alpha_2}^{+}\in \mathcal{A}_6$\\
		\hline
			$\hat{\alpha_3}$&$\hat{\alpha_3}^{-}\in \mathcal{A}_4$,  $\hat{\alpha_3}^{+}\in \mathcal{A}_7$\\
		\hline
			$\hat{\alpha_4}$&$\hat{\alpha_4}^{-}\in \mathcal{A}_1$, $\hat{\alpha_4}^{+}\in \mathcal{A}_8$\\
		\hline
			$\hat{\beta_1}$&$\hat{\beta_1}^{-}\in \mathcal{A}_2$, $\hat{\beta_1}^{+}\in \mathcal{A}_6$\\
		\hline
			$\hat{\beta_2}$&$\hat{\beta_2}^{-}\in \mathcal{A}_3$, $\hat{\beta_2}^{+}\in \mathcal{A}_7$\\
		\hline
			$\hat{\beta_3}$&$\hat{\beta_3}^{-}\in \mathcal{A}_4$,  $\hat{\beta_3}^{+}\in \mathcal{A}_8$\\
		\hline
			$\hat{\beta_4}$&$\hat{\beta_4}^{-}\in \mathcal{A}_1$, $\hat{\beta_4}^{+}\in \mathcal{A}_5$\\
		\bottomrule
	\end{tabular}
	
	\label{tab:isometric spheres}
\end{table}

\subsection{The configuration of the eight cutting disks}
Recall  the affine disk bounded by a $\C$-circle in Definition \ref{definition:cdisk}. 

\begin{defn}From Propositions \ref{prop:endpointalpha1} and \ref{prop:endpointbeta1}, for each $\C$-arc $\hat{\alpha_i}$ (or $\hat{\beta_i}$), there is a affine disk bounded by the $\C$-circle containing this $\C$-arc. We define the  {\it cutting disk} to be the part of the affine disk bounded by the  $\C$-arc and two spinal spheres containing the end points of the $\C$-arc.
\end{defn}
See Figure 	\ref{fig:cut-disk} for a realistic view of the  cutting disk corresponding to $\hat{\beta_1}$.

\begin{figure}[htbp]
	\centering
	\includegraphics[width=5cm]{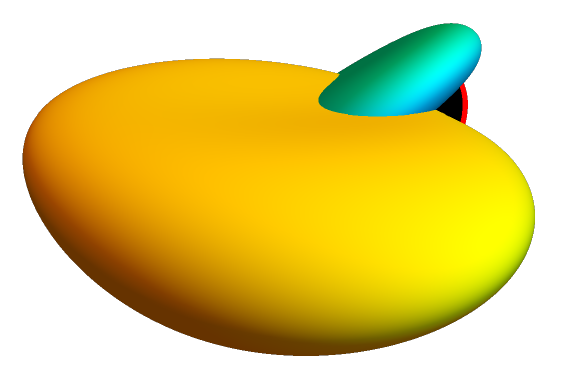}
	\caption{ The embedded cutting disk (the black one) corresponding to $\hat{\beta_1}$ is the region of the affine disk that lies outside the spinal spheres $\mathcal{S}_2$ (the yellow one), $\mathcal{S}_6$ (the cyan one) and is constrained by the $\C$-arc  $\hat{\beta_1}$.}
	\label{fig:cut-disk}
\end{figure}

From the definition, we know that each  cutting disk  properly embeds in the fundamental domain at infinity.  There are eight embedded cutting disks corresponding to the eight $\C$-arcs.   We have
 \begin{prop} The eight  cutting disks are disjoint.
\end{prop}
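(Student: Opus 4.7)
The plan is to combine the $g_2$-symmetry with the $\mathbb{C}$-circle linking criterion \eqref{eq:linked-circle} and, for any remaining candidate intersections, use the spinal sphere constraints that define the cutting disks to exclude them.

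First, the cyclic group $\langle g_2\rangle$ of order $4$ acts on the eight $\mathbb{C}$-arcs, preserving the $\hat\alpha$-family and the $\hat\beta$-family, and correspondingly permutes the eight cutting disks. Disjointness of an arbitrary pair of cutting disks is therefore equivalent, under this action, to disjointness of a pair from a short list of representatives: two ``$\alpha$-$\alpha$'' pairs (adjacent and opposite), two ``$\beta$-$\beta$'' pairs, and at most four ``$\alpha$-$\beta$'' pairs. This cuts the $\binom{8}{2}=28$ pairs down to a handful that have to be examined in detail.

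Second, for each representative pair I would write down polar vectors $v, w$ of the two $\mathbb{C}$-circles containing the arcs, obtained by applying the appropriate elements of $\Gamma_{t_0}$ to the base polar vector $n_3 \boxtimes I_2(n_1)$ used in the proof of Proposition~\ref{prop:endpointalpha1} together with its analogue for the $\hat\beta$-family. I would then evaluate the linking invariant
\begin{equation*}
L(v, w) = |\langle v, w\rangle|^2 - \langle v, v\rangle \langle w, w\rangle.
\end{equation*}
Whenever $L(v, w) > 0$, the two $\mathbb{C}$-circles are unlinked, the two full affine $\mathbb{C}$-disks are disjoint in $\mathbb{C}^2$, and the two cutting disks (which are contained in those affine $\mathbb{C}$-disks) are a fortiori disjoint as well.

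Third, when $L(v, w) \leq 0$ the two $\mathbb{C}$-circles are linked and the two full affine $\mathbb{C}$-disks meet transversely in a single point $p$; in this case I must show that $p$ does not belong to either cutting disk. Since the cutting disk associated to an arc $\hat\gamma$ is by definition the portion of the affine $\mathbb{C}$-disk that lies outside the two spinal spheres containing the endpoints of $\hat\gamma$, it suffices to locate $p$ explicitly and check that it lies strictly inside one of the four candidate spinal spheres determined by Table~\ref{tab:isometric spheres}. This reduces to evaluating a finite list of explicit inequalities $|\langle \mathbf{p}, q_0\rangle| < |\langle \mathbf{p}, h(q_0)\rangle|$ for specific elements $h \in \Gamma_{t_0}$.

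The main obstacle is the algebraic bookkeeping in the linked cases: the intersection points $p$, the polar vectors $v, w$, and the defining inequalities of the spinal spheres all live in the field $\mathbb{Q}(\sqrt{2}, u_0, v_0)$ with $u_0 = \sqrt{3\sqrt{2}-4}$ and $v_0 = \sqrt{2\sqrt{2}-1}$, and therefore involve deeply nested radicals whose signs must be compared carefully. I expect that the $\R$-Fuchsian rigidity will force most representative pairs to satisfy $L(v, w) > 0$, so that only a small number of linked pairs actually require the spinal-sphere exclusion step, and that these should resolve cleanly once the correct spinal sphere is identified from the configuration displayed in Figures~\ref{fig:configuration1} and~\ref{fig:configuration2}.
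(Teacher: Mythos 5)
Your overall strategy matches the paper's: reduce to representative pairs via the $\langle g_2\rangle$-symmetry, dispose of unlinked pairs with the invariant $L(v,w)$, and exclude the residual intersections using the spinal spheres. (The paper actually carries out this proof in Section~6 uniformly in $t$, splitting into $t\in[3/8,2/5)$, where every pair is unlinked, and $t\in[2/5,\sqrt{2}-1]$, where some pairs are linked; at $t_0=\sqrt{2}-1$ one has $L\bigl(v_{1,t_0},g_2(v_{1,t_0})\bigr)<0$, so the adjacent $\alpha$--$\alpha$ and $\beta$--$\beta$ pairs are genuinely linked and the exclusion step cannot be skipped.) The gap is in your Step~3. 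The cutting disks are not pieces of complex geodesics in $\HdC$: by Definition~\ref{definition:cdisk} they are flat affine disks lying in contact planes inside Heisenberg space $\C\times\R\cong\R^3$ (not in $\C^2$). Two distinct contact planes meet in an affine line, so the affine disks bounded by two linked $\C$-circles intersect in an affine \emph{segment} (a chord of each), not in a single transverse point; the one-point picture is correct for the complex lines in $\HdC$ that these circles bound, but those are not the disks in question. Accordingly, the paper parametrizes the whole segment $\{[x,k_1x+k_2,t_2]\mid \iota_1\leq x\leq \iota_2\}$ by $s\in[0,1]$ and verifies the spinal-sphere inequality along the entire family $v_s$, showing its minimum is positive. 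Checking a single intersection point, as you propose, would leave all but one point of the intersection unexamined, so the argument as written does not close.

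A smaller caveat: for unlinked pairs you assert that the two full affine disks are automatically disjoint. Unlinkedness of two circles in $\mathbb{S}^3$ does not by itself force flat spanning disks to be disjoint (consider nested coplanar circles), so some supplementary observation about the configuration is needed there; the paper is also terse on this point, but it should not be presented as immediate.
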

The proof of this  proposition for the deformation from the $\R$-Fuchsian case to the degenerate case will be given in Section 6 (see Proposition  \ref{prop:disjoint}). We include this proposition in the $\R$-Fuchsian case here just for completeness.

From some routine calculations and the  relation 
$$g_2g_1(g_2^{-1}g_3)(g_2g_1)^{-1}=g_1,$$
we can check that $g_2g_1(\hat{\beta_1})\cup \hat{\alpha_1}$ is the  fundamental domain of the  $g_1$ acting on its axis at infinity ${\alpha_1}$.
This allow us to get the following proposition.

 \begin{prop}\label{prop:CrownRfuchsian} 	$${\rm Crown}_{\Gamma_{t_0},g_1}= \Lambda_{t_0} \cup\Bigl(\bigcup_{g\in\Gamma_{t_0}} g\cdot\alpha_1\Bigr)=\Lambda_{t_0} \cup\Bigl(\bigcup_{g\in\Gamma_{t_0}} g\cdot(\hat{\alpha_1}\cup \hat{\beta_1})\Bigr).$$ 
 \end{prop}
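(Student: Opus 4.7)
The first equality is immediate from the definition of the Crown, so the content lies entirely in the second equality. My plan is to reduce it to a one-arc identity on a single axis, then push it out under the group.

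The starting point is the conjugation relation $g_2g_1(g_2^{-1}g_3)(g_2g_1)^{-1}=g_1$ stated in the excerpt. This shows that $g_2^{-1}g_3$ is conjugate to $g_1$ by $(g_2g_1)^{-1}$, so its axis $\beta_1$ is mapped by $g_2g_1$ onto $\alpha_1$. In particular the subarc $g_2g_1(\hat{\beta_1})$ sits inside $\alpha_1$. I would then invoke the fundamental-domain fact quoted just before the proposition: that the union
\[
F \;:=\; \hat{\alpha_1}\,\cup\, g_2g_1(\hat{\beta_1})
\]
is a fundamental domain for $\langle g_1\rangle$ acting on $\alpha_1$. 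Granting this, one has
\[
\alpha_1 \;=\; \bigcup_{n\in\mathbb{Z}} g_1^{\,n}(F).
\]

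Now apply the full orbit $\Gamma_{t_0}$:
\[
\bigcup_{g\in\Gamma_{t_0}} g\cdot\alpha_1
\;=\;\bigcup_{g\in\Gamma_{t_0},\,n\in\mathbb{Z}} g\,g_1^{\,n}(F)
\;=\;\bigcup_{g\in\Gamma_{t_0}} g\cdot F,
\]
since $g_1^{\,n}\in\Gamma_{t_0}$. Splitting $F$ into its two pieces,
\[
\bigcup_{g\in\Gamma_{t_0}} g\cdot\alpha_1
\;=\;\Bigl(\bigcup_{g\in\Gamma_{t_0}} g\cdot\hat{\alpha_1}\Bigr)\,\cup\,\Bigl(\bigcup_{g\in\Gamma_{t_0}} g\,g_2g_1\cdot\hat{\beta_1}\Bigr).
\]
Because $g_2g_1\in\Gamma_{t_0}$, right-multiplication by $g_2g_1$ is a bijection of $\Gamma_{t_0}$ onto itself, so the second union is exactly $\bigcup_{g\in\Gamma_{t_0}}g\cdot\hat{\beta_1}$. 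This yields the asserted identity. Note also that the orbits of the auxiliary arcs $\alpha_i,\beta_i$ for $i=2,3,4$ need no separate argument, since $\alpha_i=g_2^i(\alpha_1)$ and $g_2\in\Gamma_{t_0}$, hence they are already contained in $\bigcup_{g}g\cdot\alpha_1$.

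The one nontrivial input, and therefore the main obstacle, is the claim that $F=\hat{\alpha_1}\cup g_2g_1(\hat{\beta_1})$ is a fundamental domain for $\langle g_1\rangle$ on $\alpha_1$. To verify this, I would compute the four endpoints of $F$ using the explicit coordinates recorded in Table~\ref{tab:isometric spheres} (together with the known action of $g_2g_1$ on $\hat{\beta_1}^{\pm}$), show that $\hat{\alpha_1}$ and $g_2g_1(\hat{\beta_1})$ share one endpoint so $F$ is a single arc, and then check that $g_1$ maps the far endpoint of this arc to the near endpoint of its neighbor along $\alpha_1$. This is a finite and routine — though slightly bookkeeping-heavy — calculation in the coordinates of Section~\ref{section:Rfuchsian}, using that $g_1$ acts on $\alpha_1$ by hyperbolic translation so that the length of a fundamental arc is determined by its translation length.
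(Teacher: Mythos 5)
Your argument follows the same route as the paper: the paper's justification is precisely the observation (stated just before the proposition) that $g_2g_1(\hat{\beta_1})\cup\hat{\alpha_1}$ is a fundamental domain for $\langle g_1\rangle$ acting on $\alpha_1$, established via the conjugation relation $g_2g_1(g_2^{-1}g_3)(g_2g_1)^{-1}=g_1$ and routine coordinate checks, from which the orbit identity follows exactly as you spell out. You have correctly located the one nontrivial input and made explicit the set-theoretic bookkeeping that the paper leaves implicit, so the proposal is sound.
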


The principal significance of  Proposition \ref{prop:CrownRfuchsian}  is that it allows us to get the figure-eight knot complement from  Dehornoy's result by applying techniques using the fundamental domain. Since $\partial_{\infty}D_{t_0}-(\cup^4_{i=1}(\hat{\alpha_i}\cup \hat{\beta_i}))$ is a subset of $\Omega_{\Gamma_{t_0},g_1}$, the restriction of the quotient map on $\Omega_{\Gamma_{t_0},g_1} \rightarrow \Omega_{\Gamma_{t_0},g_1}/\Gamma_{t_0}$ to $\partial_{\infty}D_{t_0}-(\cup^4_{i=1}(\hat{\alpha_i}\cup \hat{\beta_i}))$ also gives  a quotient space, which can be viewed as the quotient of $\partial_{\infty}D_{t_0}-(\cup^4_{i=1}(\hat{\alpha_i}\cup \hat{\beta_i}))$ by side-pairings on $$ \cup^8_{j=1} \mathcal{A}_i\cup(\cup^4_{i=1}(\hat{\alpha_i}\cup \hat{\beta_i})).$$

\begin{prop}\label{prop:sidepairRfuchsian} The quotient space of $$\partial_{\infty}D_{t_0}-(\cup^4_{i=1}(\hat{\alpha_i}\cup \hat{\beta_i}))$$ by side-pairings and the quotient space  $\Omega_{\Gamma_{t_0},g_1}/\Gamma_{t_0}$ are homeomorphic. So it is the  figure-eight knot complement.
\end{prop}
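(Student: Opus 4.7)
The plan is to exploit the fact that $\partial_{\infty}D_{t_0}$ is (the closure of) a fundamental domain for $\Gamma_{t_0}$ acting on the full discontinuity region $\Omega_{\Gamma_{t_0}}$. Once I remove the crown from both sides, I obtain a fundamental domain for the restricted action on $\Omega_{\Gamma_{t_0},g_1}$, and the induced side-pairings then identify exactly the points in the same $\Gamma_{t_0}$-orbit. The figure-eight identification then drops out from Dehornoy's theorem (Proposition \ref{thm:Ffuchsianfigure8}).

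The crucial step is to prove the equality
\[
\partial_{\infty}D_{t_0}\cap \mathrm{Crown}_{\Gamma_{t_0},g_1}=\bigcup_{i=1}^{4}\bigl(\hat{\alpha_i}\cup\hat{\beta_i}\bigr).
\]
The inclusion $\supseteq$ is immediate by construction. For $\subseteq$, I would invoke Proposition \ref{prop:CrownRfuchsian}, which writes the crown as the union of $\Lambda_{t_0}$ with the $\Gamma_{t_0}$-orbit of $\hat{\alpha_1}\cup\hat{\beta_1}$. The limit set meets $\partial_{\infty}D_{t_0}$ only on the boundary annuli $\mathcal{A}_i$, and its contribution is already covered by the endpoints of the arcs $\hat{\alpha_i},\hat{\beta_i}$; so the real content is that no $\Gamma_{t_0}$-translate of $\hat{\alpha_1}\cup\hat{\beta_1}$ outside the $\langle g_2\rangle$-orbit $\{\hat{\alpha_i}\cup\hat{\beta_i}\}_{i=1}^{4}$ enters the interior of the solid torus $\partial_{\infty}D_{t_0}$. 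This is exactly where the disjointness of the eight cutting disks (Proposition \ref{prop:disjoint}) is used: each $\hat{\alpha_i},\hat{\beta_i}$ bounds a properly embedded cutting disk in $\partial_{\infty}D_{t_0}$, and any extra translate of an arc would force two such disks to intersect, contradicting the disjointness.

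Having established this identification, one checks that the side-pairings of $\partial_{\infty}D_{t_0}$ --- namely $g_1,g_3$ and their $g_2$-conjugates, which identify the annuli $\mathcal{A}_i$ in pairs --- also match the endpoints of the arcs as listed in Table \ref{tab:isometric spheres}: this is forced by the $\langle g_2\rangle$-equivariance of the whole picture and by the fact that the arcs are themselves translates of pieces of the single axis $\alpha_1$, so that the side-pairing that sends $\mathcal{A}_j$ to $\mathcal{A}_{j'}$ also sends $\hat{\alpha_i}^{\pm}\in\mathcal{A}_j$ to the corresponding endpoint of another $\hat{\alpha_{i'}}$ or $\hat{\beta_{i'}}$ on $\mathcal{A}_{j'}$. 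Consequently, the side-pairing quotient on $\partial_{\infty}D_{t_0}-\bigcup_{i=1}^{4}(\hat{\alpha_i}\cup\hat{\beta_i})$ coincides with the $\Gamma_{t_0}$-orbit equivalence on $\Omega_{\Gamma_{t_0},g_1}$, and Proposition \ref{thm:Ffuchsianfigure8} delivers the figure-eight knot complement.

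The main obstacle is the second paragraph: rigorously ruling out all extra intersections of the crown with $\partial_{\infty}D_{t_0}$. This needs the disjointness of cutting disks together with the explicit configuration of arcs and annuli derived in Propositions \ref{prop:endpointalpha1}--\ref{prop:endpointbeta1}; in particular, one must verify that no $\Gamma_{t_0}$-translate of the axis $\alpha_1$ can slip back into the interior of the fundamental domain after having been pushed out by a side-pairing. Once this bookkeeping is complete, the rest of the proof is a formal matching of quotients followed by an appeal to Dehornoy.
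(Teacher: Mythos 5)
Your overall route is the same as the paper's: treat $\partial_{\infty}D_{t_0}$ as a fundamental domain for $\Gamma_{t_0}$ on $\Omega_{\Gamma_{t_0}}$ (modulo $\langle g_2\rangle$), show that removing the eight arcs yields a fundamental domain for the action on $\Omega_{\Gamma_{t_0},g_1}$, match the induced side-pairings with the orbit equivalence, and conclude by Dehornoy's theorem. The paper's own proof is essentially a two-line mutual-inclusion argument resting on Proposition \ref{prop:CrownRfuchsian}, and you have correctly isolated the one point of real content, namely the equality $\partial_{\infty}D_{t_0}\cap \mathrm{Crown}_{\Gamma_{t_0},g_1}=\bigcup_{i=1}^{4}(\hat{\alpha_i}\cup\hat{\beta_i})$ (up to the contribution of $\Lambda_{t_0}$ on the boundary annuli).

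However, the justification you give for the inclusion $\subseteq$ is a non sequitur. The disjointness of the \emph{eight} cutting disks (Proposition \ref{prop:disjoint}) says nothing about a ninth object: if some translate $g\cdot\alpha_1$ with $g\notin\langle g_2\rangle\cdot\{1,g_2g_1\}$ re-entered the interior of $\partial_{\infty}D_{t_0}$, it would carry its own affine disk, which is not one of the eight whose pairwise disjointness was verified, so no contradiction with Proposition \ref{prop:disjoint} arises. The correct (and simpler) argument is the one implicit in the paper: Proposition \ref{prop:CrownRfuchsian} rewrites the crown as $\Lambda_{t_0}\cup\bigcup_{g\in\Gamma_{t_0}}g\cdot(\hat{\alpha_1}\cup\hat{\beta_1})$, and since $\hat{\alpha_1}\cup\hat{\beta_1}$ is contained in the fundamental domain $\partial_{\infty}D_{t_0}$, any translate $g\cdot(\hat{\alpha_1}\cup\hat{\beta_1})$ lies in $g\cdot\partial_{\infty}D_{t_0}$, which meets the interior of $\partial_{\infty}D_{t_0}$ only when $g\in\langle g_2\rangle$; this yields exactly the $\langle g_2\rangle$-orbit $\{\hat{\alpha_i},\hat{\beta_i}\}_{i=1}^{4}$ and nothing more. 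The cutting disks play no role in this proposition; they are used later to control the topology of the complement of the arcs and its stability under deformation. With that substitution your argument closes correctly.
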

\begin{proof} We denote by $M$ the quotient space of  $\partial_{\infty}D_{t_0}-\cup^4_{i=1}(\hat{\alpha_i}\cup \hat{\beta_i})$ by side-pairings.  Then it is trivial that $M$ is a subspace of  $\Omega_{\Gamma_{t_0},g_1}/\Gamma_{t_0}$. Conversely, from the side-pairings on $ \cup^8_{j=1} \mathcal{A}_i\cup(\cup^4_{i=1}(\hat{\alpha_i}\cup \hat{\beta_i}))$, we have $M$ contains $\Omega_{\Gamma_{t_0},g_1}/\Gamma_{t_0}$. So they are homeomorphic.
		Then by Proposition  \ref{thm:Ffuchsianfigure8}, $M$  is the  figure-eight knot complement.
	
\end{proof}

\section{Geometric stability in the deformation }
In this section we focus on the group $\Gamma_{t}$ for $t\in (3/8,\sqrt{2}-1]$. The combinatorics of the fundamental domain of $\Gamma_{t}$
does not change for $t\in (3/8,\sqrt{2}-1]$ due to the work of \cite{pwx}.
Therefore, we only need to show that the configurations of the eight arcs and the eight cutting disks are the same as the $\R$-Fuchsian representation.

\begin{prop}\label{prop:arcends} The spinal spheres where the end points of eight arcs are located do not change during the deformation.
\end{prop}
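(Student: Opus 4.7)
The plan is a continuity-plus-monodromy argument. The eight arcs $\hat{\alpha}_i(t),\hat{\beta}_i(t)$ and the eight spinal spheres $\mathcal{S}_k(t)$ depend analytically on $t\in(3/8,\sqrt{2}-1]$, and by Proposition \ref{prop:eight-bisectors} the combinatorics of the bisector arrangement does not change on this interval. Hence the only way an endpoint of some $\hat{\alpha}_i$ or $\hat{\beta}_i$ could switch from lying on a facet $\mathcal{A}_j$ to lying on a different facet $\mathcal{A}_{j'}$ would be through one of two transitions at some intermediate parameter $t^\ast$: either the endpoint passes through a point of $\mathcal{A}_j\cap \mathcal{A}_{j'}$, or the underlying $\C$-arc enters (respectively leaves) a spinal sphere it previously avoided. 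The task reduces to ruling out both kinds of transitions throughout the open interval.

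By the $g_2$-equivariance noted before Proposition \ref{prop:endpointalpha1}, it is enough to track the endpoints of $\hat{\alpha}_4$ and $\hat{\beta}_1$. Following the proof of Proposition \ref{prop:endpointalpha1}, I would compute the polar vector $v(t)=g_2^{-1}\bigl(n_3(t)\boxtimes I_2(n_1)\bigr)$ of the $\C$-circle carrying $\alpha_4$, now as an explicit vector in $\mathbb{Q}\bigl(t,a(t),b(t),c(t)\bigr)$, and parametrize a lift $V_1(x,t)$ of this $\C$-circle by a real variable $x\in[-1,1]$ as in equation (\ref{eq:alpha4}). Substituting $V_1(x,t)$ into each bisector equation $|\la V_1,q_0\ra|^2=|\la V_1,g(q_0)\ra|^2$ yields, after clearing denominators, a polynomial relation $F_k(x,t)=0$ for $k=1,\dots,8$. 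The analogue of Proposition \ref{prop:endpointalpha1} then amounts to verifying:
\begin{itemize}
\item for $k\in\{2,3,6,7\}$, the equation $F_k(x,t)=0$ has no real solution $x\in[-1,1]$ for any $t\in(3/8,\sqrt{2}-1]$;
\item for $k\in\{1,4,5,8\}$, the equation $F_k(x,t)=0$ has a unique solution $x_k(t)\in(-1,1)$ which is continuous in $t$, and the ordering of the four numbers $x_1(t),x_4(t),x_5(t),x_8(t)$ along $\alpha_4(t)$ is the same as at $t_0=\sqrt{2}-1$.
\end{itemize}
Together with the analogous computation for $\hat{\beta}_1$ (using the polar vector of the axis of $g_2^{-1}g_3$), this forces the incidence table from Table \ref{tab:isometric spheres} to persist for every $t\in(3/8,\sqrt{2}-1]$.

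The main obstacle is the uniform sign analysis of the polynomials $F_k(x,t)$. A practical route is to eliminate $x$ by computing the $x$-discriminant of $F_k$, obtaining a univariate polynomial in $t$ (with coefficients rational in $a(t),b(t),c(t)$) whose vanishing would signal a bifurcation of the intersection pattern; one then checks that this discriminant has no zero in $(3/8,\sqrt{2}-1]$, so that the configuration established at $t_0=\sqrt{2}-1$ extends by analytic continuation across the whole interval. The delicate point is the limit $t\to 3/8^+$: by Proposition \ref{prop:eight-bisectors}(3), the boundaries of bisectors three apart become tangent at a parabolic fixed point exactly when $t=3/8$, so some intersection points of the arcs with spinal spheres are expected to migrate toward this tangency locus. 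One must verify that this migration occurs only in the limit and not in the open interval, which in practice means checking that the critical value $t=3/8$ is the unique root of the relevant discriminants on $[3/8,\sqrt{2}-1]$.
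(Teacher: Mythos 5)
Your proposal follows essentially the same route as the paper's proof: reduce to $\hat{\alpha}_4$ and $\hat{\beta}_1$ by $g_2$-equivariance, parametrize a lift of the carrying $\C$-circle, substitute into the eight bisector equations to show the arc misses four spinal spheres and meets the other four exactly once, and then rule out by continuity the only possible transitions (an endpoint crossing $\mathcal{A}_j\cap\mathcal{A}_{j'}$ or the arc entering a previously avoided sphere). The paper executes the last step by observing that the two linear equations cut out by $\B_1$ and $\B_5$ on the circle have proportional gradient terms but incompatible constants, which is the same bifurcation-exclusion your discriminant computation would deliver.
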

\begin{proof}
	We will show that one end point of $\C$-arc $\hat{\alpha_{4,t}}$ 
is always located	on the spinal sphere $\mathcal{S}_{1,t}$ during the deformation. The proofs in other cases are similar, and we omit them.

Our proof is in three steps:
\begin{description}
	\item[Step 1] To show that the end point $p_{4,2}$  of the $\C$-arc $\alpha_{4,t}$  lies in the spinal spheres $\mathcal{S}_{1,t}$ and $\mathcal{S}_{5,t}$  and the end point $p_{4,1}$  of the $\C$-arc $\alpha_{4,t}$  lies in the spinal spheres $\mathcal{S}_{4,t}$ and $\mathcal{S}_{8,t}$.
	\item[Step 2] To show that the  $\C$-arc $\alpha_{4,t}$($\arc {p_{4,1}}{p_{4,2}}$) intersects with the
	 spinal spheres $\mathcal{S}_{1,t}$, $\mathcal{S}_{4,t}$, $\mathcal{S}_{5,t}$ and $\mathcal{S}_{8,t}$ only once and does not intersect other spinal spheres.
	\item[Step 3] Note that the spinal sphere $\mathcal{S}_{1,t}$ only intersects with two  spinal spheres, $\mathcal{S}_{5,t}$ and $\mathcal{S}_{8,t}$. In the beginning of the deformation, one end point of  $\hat{\alpha_{4,t}}$ is on the spinal sphere $\mathcal{A}_{1,t}$, see Figure \ref{fig:alpha4-arc}. 
	If the configuration in  Figure \ref{fig:alpha4-arc} turns into the configurations in Figure \ref{fig:alpha4-arc2}, then the $\C$-arc $\alpha_{4,t}$ will pass through $\mathcal{A}_{1,t}\cap \mathcal{A}_{5,t}$ or $\mathcal{A}_{1,t}\cap \mathcal{A}_{8,t}$ at some time  during the deformation by a geometric continuity argument. We will show that this is impossible.
\end{description}

We begin with the Step 1.  By a simple calculation, we have

\begin{equation}\label{eq:coor-of-p1p2}
	p_{4,1}=\left[
	\begin{array}{c}
	-\frac{\sqrt{2}(-t-\sqrt{6t-2}+\sqrt{1-2t-t^2}i)}{2\sqrt{6t-2}} \\
		\sqrt{\frac{8t-3}{3t-1}} \\
			-\frac{\sqrt{2}(-t+\sqrt{6t-2}+\sqrt{1-2t-t^2}i)}{2\sqrt{6t-2}}\\
	\end{array}
	\right],\quad p_{4,2}=\left[
	\begin{array}{c}
		-\frac{\sqrt{2}(-t-\sqrt{6t-2}+\sqrt{1-2t-t^2}i)}{2\sqrt{6t-2}} \\
		-\sqrt{\frac{8t-3}{3t-1}} \\
		-\frac{\sqrt{2}(-t+\sqrt{6t-2}+\sqrt{1-2t-t^2}i)}{2\sqrt{6t-2}}\\
	\end{array}
	\right].
\end{equation}

Substitute  (\ref{eq:coor-of-p1p2}) to the equations of the bisectors of $\B_{1,t}$ and $\B_{5,t}$. Then we get
\begin{equation*}
	\begin{aligned}	
		&|\langle  p_{4,2}, q_0 \rangle|^2-|\langle p_{4,2}, g_2g_1(q_0)\rangle|^2=\frac{8t-3-\sqrt{8t-3}}{4t-2},\\
		&|\langle p_{4,2}, q_0 \rangle|^2-|\langle  p_{4,2}, g_3^{-1}(q_0)\rangle|^2=\frac{(4t-1)\sqrt{8t-3}-8t+3}{2(2t-1)^2}.
	\end{aligned}
\end{equation*}

It is easy to check that 
$$\frac{8t-3-\sqrt{8t-3}}{4t-2}>0,\quad \frac{(4t-1)\sqrt{8t-3}-8t+3}{2(2t-1)^2}>0$$
	for $t\in (3/8,\sqrt{2}-1]$.
	
	That is, the point  $p_{4,2}$ lies inside the spinal sphere $\mathcal{S}_{1,t}$ and $\mathcal{S}_{5,t}$.
	
	Substitute  (\ref{eq:coor-of-p1p2}) to the equations of the bisectors of $\B_{4,t}$ and $\B_{8,t}$. Then we get
	
	\begin{equation*}
		\begin{aligned}	
			&|\langle  p_{4,2}, q_0 \rangle|^2-|\langle p_{4,2}, g_2^{-1}g_3^{-1}(q_0)\rangle|^2=\frac{8t-3+\sqrt{8t-3}}{4t-2},\\
			&|\langle p_{4,2}, q_0 \rangle|^2-|\langle  p_{4,2}, g_2^{-1}g_1(q_0)\rangle|^2=\frac{(1-4t)\sqrt{8t-3}-8t+3}{2(2t-1)^2}.
		\end{aligned}
	\end{equation*}
Both equations' right sides are  negative for $t\in (3/8,\sqrt{2}-1]$. So the point  $p_{4,2}$ lie outside the spinal sphere $\mathcal{S}_{4,t}$ and $\mathcal{S}_{8,t}$.

Similarly, we can show that  the point  $p_{4,1}$ lies inside the spinal sphere $\mathcal{S}_{4,t}$ and $\mathcal{S}_{8,t}$ and lies outside the spinal sphere $\mathcal{S}_{1,t}$ and $\mathcal{S}_{5,t}$.

Next, we will complete the Step 2.

The polar vector of $\alpha_{4,t}$ is given by 
\begin{equation*}
\left[
\begin{array}{c}
	\frac{\sqrt{6t-2}+t+i\sqrt{1-2t-t^2}}{\sqrt{6t-2}-t-i\sqrt{1-2t-t^2}} \\
 0\\
	1\\
\end{array}
\right].
\end{equation*}

To make the calculation simpler, we apply the following transformation
$$	T_1=\left[
\begin{array}{ccc}
	\sqrt{4t-1-2t\sqrt{6t-2}} & 0 & -\frac{2\sqrt{6t-2}\sqrt{-t^2-2t+1}i}{\sqrt{4t-1-2t\sqrt{6t-2}}} \\
	0 & 1 & 0 \\
	0 & 0 & \frac{1}{\sqrt{4t-1-2t\sqrt{6t-2}}} \\
\end{array}
\right].
$$

Then the polar vector of $\alpha_{4,t}$ is given by 
\begin{equation*}
	\left[
	\begin{array}{c}
		8t-3 \\
		0\\
		1\\
	\end{array}
	\right].
\end{equation*}

The lift of the  $\C$-arc  $T_1(\alpha_{4,t})$ can be written as 
\begin{equation}\label{eq:Talpha4}
	V_t=\left[
	\begin{array}{c}
			3-8t\\
		\sqrt{2}\sqrt{8t-3} (x+yi) \\
		1 \\
	\end{array}
	\right],
\end{equation}
where $y=-\sqrt{1-x^2},-\frac{t}{\sqrt{1-2t}}\leq x\leq \frac{t}{\sqrt{1-2t}}$.

We claim that  the intersection of $T_1(\alpha_{4,t})$ with  $T_1(\mathcal{S}_{3,t})$ is empty.
Substituting (\ref{eq:Talpha4}) to the equation of the bisector $T_1(\mathcal{B}_{3,t})$
$$|\langle V_t, T_1(q_0) \rangle|=|\langle V_t, T_1g_2^{2}g_1(q_0)\rangle|.$$
We get the equation

$$k_{3,1}x+k_{3,2}y+k_{3,0}=0,$$
where
\begin{equation*}
	\begin{aligned} 
		&k_{3,1}=-\frac{24c(t)(t-1/3)\left(\sqrt{2-6t}(a^2+a/2-1/4)-3/2t(t-1/2)\right)}{(2t-1)^2}, \\
		&k_{3,2}=\frac{24c(t)(t-1/3)\left((t/2-1/4)\sqrt{t^2+12t-1}+\sqrt{2}t\sqrt{(3t-1)(a^2+2t-1)}\right)}{(2t-1)^2},\\
		&k_{3,0}=\frac{24(t-1/3)(3t^2-7/2t+3/4)}{(2t-1)^2}.
	\end{aligned}
\end{equation*}
By using some computer algebra software, we find that the minimum of the  expression 
$$\frac{k_{3,0}^2}{k_{3,1}^2+k_{3,2}^2}$$
is given approximately by 6.5907 for  $t\in [3/8,\sqrt{2}-1]$.
So the family of lines does not intersect the circle $x^2+y^2=1$. 
 Thus $\alpha_{4,t}$ does not intersect with the spinal sphere  $\mathcal{S}_{3,t}$. With the same argument, one can also prove that $\alpha_{4,t}$ does not intersect with $\mathcal{S}_{2,t}$, $\mathcal{S}_{6,t}$, $\mathcal{S}_{7,t}$.

Substituting (\ref{eq:Talpha4}) to the equation of the bisector $T_1(\mathcal{B}_{1,t})$
$$|\langle V_t, T_1q_0 \rangle|=|\langle V_t, T_1g_1(q_0)\rangle|.$$
We get the equation
\begin{equation}\label{eq:B1-alpha4}
k_{1,1}x+k_{1,2}y+k_{1,0}=0,
\end{equation}

where
\begin{equation*}
	\begin{aligned} 
		&k_{1,1}=\frac{(6t-2)c(t)\left(t-\sqrt{2-6t}\right)}{2t-1}, \\
		&k_{1,2}=\frac{(6t-2)c(t)\sqrt{t^2+12t-1}}{2t-1},\\
		&k_{1,0}=\frac{(6t-2)(8t-3)}{2t-1}.
	\end{aligned}
\end{equation*}

The intersection point corresponds to the solution

\begin{equation*}
	\begin{aligned} 
		y&=\frac{-k_{1,0}k_{1,2}-\sqrt{k_{1,1}^4+k_{1,1}^2k_{1,2}^2-k_{1,0}^2k_{1,1}^2}}{k_{1,1}^2+k_{1,2}^2}, \\
		x&=-\frac{k_{1,2}y+k_{1,0}}{k_{1,1}}.
	\end{aligned}
\end{equation*}
Similarly, it can be showed that the  $\C$-arc  $\alpha_{4,t}$ has only one intersection with the spinal spheres $\mathcal{S}_{4,t}$, $\mathcal{S}_{5,t}$ and $\mathcal{S}_{8,t}$.

In the last step,  we show that  $\alpha_{4,t}$ can not pass through the intersection of the spinal spheres $\mathcal{S}_{1,t}$ and $\mathcal{S}_{5,t}$.

Substituting (\ref{eq:Talpha4}) to the equation of the bisector $T_1(\mathcal{B}_{5,t})$
$$|\langle V_t, T_1q_0 \rangle|=|\langle V_t, T_1g_1(q_0)\rangle|.$$
We get the equation
\begin{equation}\label{eq:B5-alpha4}
	k_{5,1}x+k_{5,2}y+k_{5,0}=0,
\end{equation}

where
\begin{equation*}
	\begin{aligned} 
		&k_{5,1}=-\frac{6(t-1/3)(t-1/4)c(t)\left(t-\sqrt{2-6t}\right)}{(t-1/2)^2}, \\
		&k_{5,2}=-\frac{6(t-1/3)(t-1/4)c(t)\sqrt{t^2+12t-1}}{(t-1/2)^2},\\
		&k_{5,0}=\frac{6(t-1/3)(2t-3/4)}{(t-1/2)^2}.
	\end{aligned}
\end{equation*}

Then we have
$$k_{1,1}=k_{5,1}\frac{1-2t}{4t-1},\quad k_{1,2}=k_{5,2}\frac{1-2t}{4t-1},\quad k_{1,0}=k_{5,0}(1-2t).$$

So the equations (\ref{eq:B1-alpha4}) and (\ref{eq:B5-alpha4}) has no common solution.  With the same argument, we can prove that  $\alpha_{4,t}$ can not pass through the intersection of the spinal spheres $\mathcal{S}_{4,t}$ and $\mathcal{S}_{8,t}$. 

\end{proof}

\begin{figure}
	\begin{center}
		\begin{tikzpicture}
			\node at (0,0) {\includegraphics[width=6cm]{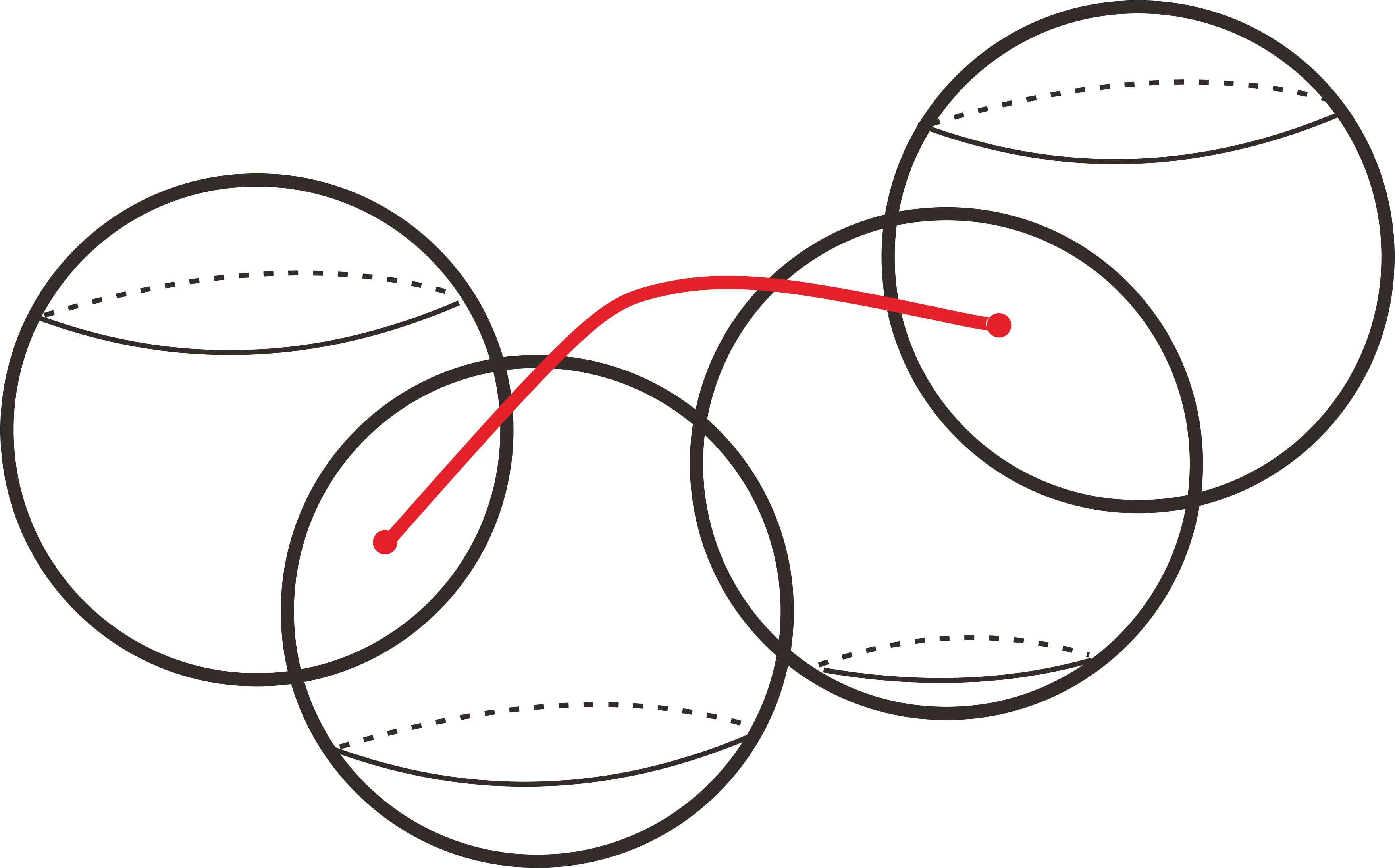}};
	\node at (-0.1,0.9){\large $\alpha_{4,t}$};
	\node at (-0.6,-2.2){\large $\mathcal{S}_{1,t}$};
	\node at(-2.5,-1.2){\large$\mathcal{S}_{5,t}$};	
	\node at(1.6,-1.4){\large$\mathcal{S}_{8,t}$};	
	\node at(3.2,0.1){\large$\mathcal{S}_{4,t}$};					
		\end{tikzpicture}
	\end{center}
	\caption{ The configuration of arc $\alpha_{4,t}$ and the spinal spheres $\mathcal{S}_{1,t}$, $\mathcal{S}_{4,t}$, $\mathcal{S}_{5,t}$, $\mathcal{S}_{8,t}$.}
	\label{fig:alpha4-arc}
\end{figure}


\begin{figure}
	\begin{center}
		\begin{tikzpicture}
			\node at (0,0) {\includegraphics[width=12cm,height=4cm]{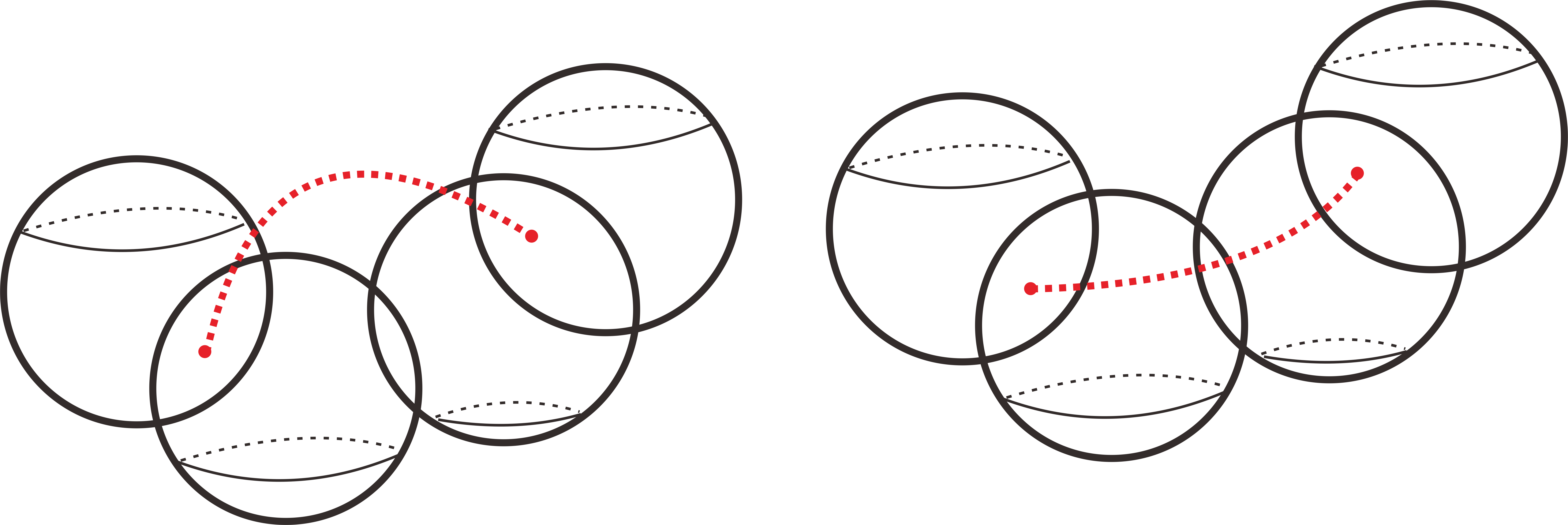}};
			\node at (-3.1,0.9){\large $\alpha_{4,t}$};
			\node at (-5.6,-1.4){\large $\mathcal{S}_{5,t}$};
			\node at(-3.7,-2.3){\large$\mathcal{S}_{1,t}$};	
			\node at(-1.9,-1.7){\large$\mathcal{S}_{8,t}$};	
			\node at(-1.4,1.8){\large$\mathcal{S}_{4,t}$};
			
		\node at (2.7,0){\large $\alpha_{4,t}$};
		\node at (0.9,-1){\large $\mathcal{S}_{5,t}$};
		\node at(4.3,-1.2){\large$\mathcal{S}_{8,t}$};	
		\node at(2.5,-1.8){\large$\mathcal{S}_{1,t}$};	
		\node at(5.1,2.2){\large$\mathcal{S}_{4,t}$};		
								
		\end{tikzpicture}
	\end{center}
	\caption{ The impossible configurations of arc $\alpha_{4,t}$ and the spinal spheres $\mathcal{S}_{1,t}$, $\mathcal{S}_{4,t}$, $\mathcal{S}_{5,t}$, $\mathcal{S}_{8,t}$ during the deformation. Compare this with Figure \ref{fig:alpha4-arc}.}
	\label{fig:alpha4-arc2}
\end{figure}

\begin{prop} \label{prop:disjoint} The eight cutting disks are disjointed during the deformation.
\end{prop}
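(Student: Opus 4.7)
The plan is to combine the order-four symmetry generated by $g_2$ with a continuity argument anchored at the $\R$-Fuchsian parameter $t_0=\sqrt{2}-1$, where the cutting disks are disjoint (as already used in Section~5). First, I would exploit the symmetry $g_2\colon \hat\alpha_i\mapsto\hat\alpha_{i+1}$, $\hat\beta_i\mapsto\hat\beta_{i+1}$ (indices mod $4$), which permutes the spinal spheres by $\mathcal{S}_k\mapsto \mathcal{S}_{k+2}$, to reduce the $\binom{8}{2}=28$ pairwise disjointness checks to eight orbit representatives: the $\alpha\alpha$--pairs $(\hat\alpha_1,\hat\alpha_2)$ and $(\hat\alpha_1,\hat\alpha_3)$, the corresponding two $\beta\beta$--pairs, and the four $\alpha\beta$--pairs $(\hat\alpha_1,\hat\beta_j)$, $j=1,\dots,4$. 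Proposition~\ref{prop:arcends} guarantees that for each such pair the spinal spheres carrying the four endpoints are independent of $t\in(3/8,\sqrt{2}-1]$, so Table~\ref{tab:isometric spheres} pins down exactly where the boundary of each cutting disk meets $\partial_\infty D_t$.

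Next I would invoke continuity: the arcs $\hat\alpha_i,\hat\beta_i$ and their containing contact planes depend continuously (in fact real-analytically) on $t$. Disjointness at $t_0$ is therefore preserved on an open subset of $(3/8,\sqrt{2}-1]$, and any failure must occur at a first tangency parameter $t^*$ at which one of three events happens: (a) an endpoint of a $\C$-arc lands on another cutting disk, (b) the interiors of two $\C$-arcs meet inside $\partial_\infty D_t$, or (c) the two underlying affine $\C$-disks first become internally tangent. Event (a) is ruled out by Proposition~\ref{prop:arcends} combined with transversality of the $\C$-arcs at their endpoint spinal spheres, and event (b) can be verified by the same ``line avoids the unit circle'' calculation used in Step~2 of the proof of Proposition~\ref{prop:arcends}.

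Finally, for event (c), for each representative pair I would write down the polar vectors of the two $\C$-circles carrying the relevant arcs in terms of the functions $a(t),b(t),c(t)$, compute the linking quantity $L(v_i,v_j)$ of \eqref{eq:linked-circle}, and parametrize the intersection line of the two ambient contact planes. Showing that this line misses both cutting disks for all $t\in(3/8,\sqrt{2}-1]$ reduces to verifying a collection of polynomial/radical inequalities in $t$, in the same spirit as the inequalities on the coefficients $k_{3,0},k_{3,1},k_{3,2}$ and $k_{1,0},k_{1,1},k_{1,2}$ in the proof of Proposition~\ref{prop:arcends}. I expect the main obstacle to be the mixed $\alpha\beta$--pairs that share a common endpoint spinal sphere---for instance $(\hat\alpha_1,\hat\beta_1)$ sharing $\mathcal{A}_2$ and $(\hat\alpha_4,\hat\beta_4)$ sharing $\mathcal{A}_1$---where the two affine $\C$-disks approach each other near the shared spinal sphere and one must push a non-trivial inequality involving the surds $\sqrt{6t-2}$ and $\sqrt{1-2t-t^2}$ across the whole deformation interval rather than only at its $\R$-Fuchsian endpoint.
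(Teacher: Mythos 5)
Your computational toolkit --- polar vectors, the linking quantity $L$ of \eqref{eq:linked-circle}, and the intersection line of the two ambient contact planes --- is exactly what the paper uses, and the $g_2$-symmetry reduction to representative pairs is implicit there as well. However, the continuity/first-tangency wrapper rests on a misconception that would send you looking for an event that never occurs. For the pairs whose $\C$-circles are linked --- and for $(\hat{\alpha_1},\hat{\alpha_2})$ one computes $L\left(v_{1,t},g_2(v_{1,t})\right)=-2(3t-1)(5t-2)/(2t-1)^2$, which is negative for all $t\in(2/5,\sqrt{2}-1]$, including the $\R$-Fuchsian point $t_0$ --- the two affine $\C$-disks genuinely intersect in a nondegenerate segment for \emph{every} such $t$. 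There is no ``first internal tangency'' to detect, because the affine disks are never disjoint on that range, not even at $t_0$. The reason the \emph{cutting} disks are nevertheless disjoint is that the intersection segment of the two affine disks lies entirely inside a spinal sphere ($\mathcal{S}_{2,t}$ for the pair $\hat{\alpha_{1,t}},\hat{\alpha_{2,t}}$), and a cutting disk is by definition the part of the affine disk lying outside the spinal spheres. This is the decisive inequality in the paper's proof: it parametrizes the segment $[\iota_1,\iota_2]$ on the intersection line of the two contact planes and verifies that $|\langle v_s,q_0\rangle|^2-|\langle v_s,g_2g_1(q_0)\rangle|^2>0$ along it for all $t\in[2/5,\sqrt{2}-1]$. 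Your final paragraph does gesture at the right computation (``show the line misses both cutting disks''), but without identifying the spinal sphere as the object that swallows the intersection segment, the inequalities you ``expect to verify'' are not pinned down, and the one you named (internal tangency of the affine disks) is the wrong one.

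Two further points of comparison. First, the paper splits off the range $t\in[3/8,2/5)$, where it checks non-linking ($L>0$) for each pair --- e.g.\ $L(v_{1,t},v_{2,t})=2(1-3t)/(2t-1)>0$ there --- and concludes disjointness of the disks from that alone, with no contact-plane computation; your plan does not anticipate that the linked/unlinked dichotomy changes across the deformation interval. Second, your event list (a)--(c) does not exhaust the possible first contacts even in principle: the boundary of a cutting disk consists of the $\C$-arc \emph{and} two arcs lying on spinal spheres, so a first contact could occur along a spinal-sphere boundary arc, which is covered by neither (a) nor (b). This is harmless only because the contact-plane/intersection-segment computation subsumes all boundary and interior contacts at once --- which is precisely why the paper dispenses with the continuity argument and verifies disjointness directly for every $t$.
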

\begin{proof}

First, we note that each pair of the eight $\C$-circles	containing the  $\C$-arcs is not linked for $t\in [3/8,2/5]$. Therefore, both these eight discs and their corresponding cutting discs do not intersect.
These observations suggest dividing the analysis into two cases.

{\bf Case 1}: $t\in [3/8,2/5)$. Let $v_{1,t}$,  $v_{2,t}$ be the polar	vectors of the $\C$-circles containing the $\C$-arc $\alpha_{1,t}$ and $\C$-arc $\beta_{1,t}$. Then 
\begin{equation*}
v_{1,t}=\left[
\begin{array}{c}
	-1 \\
	-\frac{\sqrt{2}a(t)(-t+b(t)i)}{2\sqrt{2t-1}} \\
	1\\
\end{array}
\right]\quad {\rm and}\quad v_{2,t}=\left[
	\begin{array}{c}
		-\frac{t-a(t)-1+b(t)i}{4\sqrt{1-2t}}\\
	\frac{\sqrt{2}a(t)}{4\sqrt{1-2t}}\\
	\frac{t+a(t)-1+b(t)i}{4\sqrt{1-2t}} \\
	\end{array}
	\right].
\end{equation*}
	
By the non-linking condition (\ref{eq:linked-circle}), we have
$$L(v_{1,t},v_{2,t})=\frac{2(1-3t)}{2t-1}.$$
It is easy to see that  the $\C$-circles containing the $\C$-arc $\alpha_{1,t}$ and $\C$-arc $\beta_{1,t}$ can not be linked.

A simple calculation yield
$$L\left(v_{1,t},g_2(v_{1,t})\right)=-\frac{2(15t^2-11t+2)}{(2t-1)^2}.$$
So  the $\C$-circles containing the $\C$-arcs $\alpha_{1,t}$ and $\C$-arc $\alpha_{2,t}$ can not be linked.

 Similar calculations will allow us to see that each pair of the eight $\C$-circles containing the eight $\C$-arcs  can not be linked for $3/8\leq t<2/5$.
 
 {\bf Case 2}: $t\in[2/5,\sqrt{2}-1]$. As an example, we only show that the cutting disks corresponds to the  $\C$-arcs $\alpha_{1,t}$ and $\C$-arc $\alpha_{2,t}$ are disjointed. 
 
 In this case,  the $\C$-circles containing  $\C$-arcs $\alpha_{1,t}$ and $\C$-arc $\alpha_{2,t}$ are linked.

From the polar vector $v_{1,t}$, we see that the contact plane containing the  $\C$-arcs $\hat{\alpha_1}$ 
 based at the point with Heisenberg coordinate $[x_1,y_1,t_1]$, where
  \begin{equation*}
 	x_1=\frac{t\sqrt{2}a(t)}{2\sqrt{2t-1}},\, y_1=-\frac{\sqrt{2}a(t)b(t)}{2\sqrt{2t-1}},\, t_1=0.
 \end{equation*}
 The projection of the   $\C$-circle to $\C$-plane is  Euclidean circle
 center at $(x_1,y_1)$ with radius $r_1=\sqrt{\frac{6-16t}{2t-1}}$.
 \begin{equation*}
 	\left[\frac{t\sqrt{2}a(t)}{2\sqrt{2t-1}},-\frac{\sqrt{2}a(t)b(t)}{2\sqrt{2t-1}},0\right],\quad \left[0,0,\frac{-4a(t)b(t)}{2ta(t)+4t-1}\right]
 \end{equation*}	
 respectively.

  After normalization, the polar vector
 $g_2(v_{1,t})$ of the $\C$-arc $\alpha_{2,t}$ is given by 
 \begin{equation*}
 	\left[
 	\begin{array}{c}
 		\frac{8t-3-2a(t) b(t)i}{2t a(t)+4t-1}\\
 		0\\
 		1\\
 	\end{array}
 	\right].
 \end{equation*}
  Then the contact plane containing the  $\C$-arc $\hat{\alpha_{2,t}}$ and $\hat{\alpha_{2,t}}$  based at the points with Heisenberg coordinates  $[x_2,y_2,t_2]$
  where
 \begin{equation*}
 	x_2=0,\, y_2=0,\, t_2=\frac{-4a(t) b(t)}{2t a(t)+4t-1}.
 \end{equation*}
 The projection of the  $\C$-circle containing $\alpha_{2,t}$ to $\C$-plane is  Euclidean circle
 center at $(x_2,y_2)$ with radius $r_2=\sqrt{\frac{16t-6}{2t a(t)+4t-1}}$.
 
 Define
 \begin{equation*}
 	k_1=-\frac{b(t)}{t},\quad k_2=-\frac{\sqrt{2}(2t-1)b(t)}{t(2t a(t)+4t-1)}.
 \end{equation*}

  The intersection of these  contact planes is an affine line given by 
 \begin{equation*}
 	\{[x,k_1x+k_2,t_2] | x\in \R \}.
 \end{equation*}
 
 By studying the intersection of the affine line with the $\C$-circles, we get  that the intersection of  these two affine disks is an affine segment given by 
 \begin{equation*}
 	\{[x,0,0] | \iota_1 \leq x\leq \iota_2  \},
 \end{equation*}
 where
 
 \begin{equation*}
 	\begin{aligned}
 		&\iota_1=\frac{\sqrt{k_1^2r_2^2-k_2^2+r_2^2}-k_1k_2}{1+k_1^2}, \\
 		&\iota_2=\frac{k_1y_1+x_1-k_1k_2+\sqrt{(k_1y_1+x_1-k_1k_2)^2-(k_2^2-2y_1k_2+2)(1+k_1^2)}}{1+k_1^2}. \\ 
 	\end{aligned}
 \end{equation*}

Define
 \begin{equation*}
 	v_s=\left[
 	\begin{array}{c}
 		\frac{-\left(\iota_1s+(1-s)\iota_2\right)^2}{2} \\
 	\iota_1s+(1-s)\iota_2 \\
 		1 \\
 	\end{array}
 	\right],
 \end{equation*}
where $s\in [0,1]$.
 By using some computer algebra software, we find that the minimum of the  expression 
 $$|\langle  v_s, q_0 \rangle|^2-|\langle  v_s, g_2g_1(q_0)\rangle|^2$$
 is  given approximately by 0.3616753 for $s\in [0,1]$ and $t\in [2/5,\sqrt{2}-1]$. We omit writing the explicit expression, because it is a bit too complicated to fit on paper. This means that the affine segment lies inside the spinal sphere  $\mathcal{S}_{2,t}$.  So the intersection of the cut disks corresponding to the  arcs $\hat{\alpha_{1,t}}$ and $\hat{\alpha_{2,t}}$ is empty, see Figure \ref{fig:disjoint-disk}.

\begin{figure}
	\begin{center}
		\begin{tikzpicture}
			\node at (0,0) {\includegraphics[width=6cm]{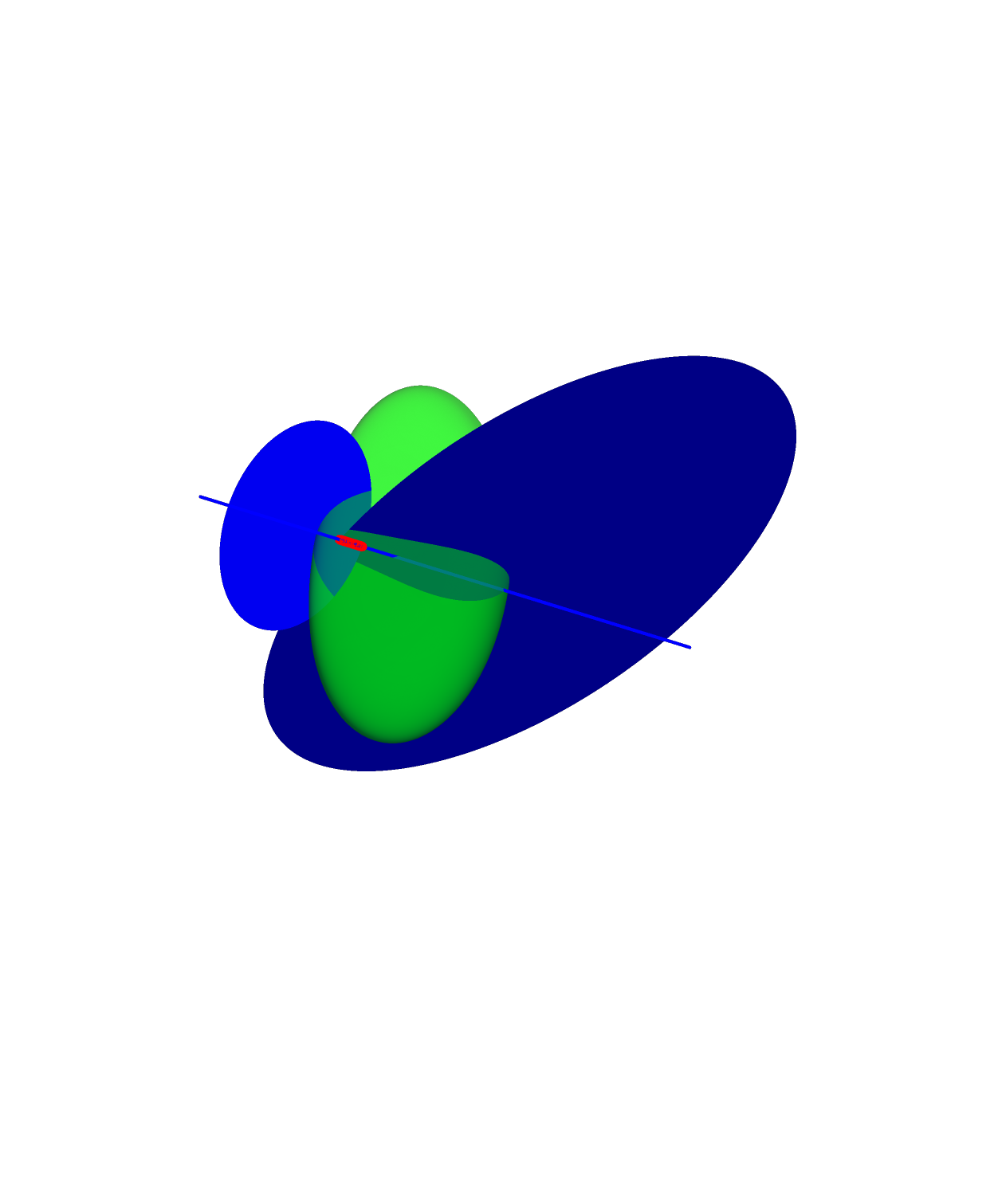}};
			\node at (-0.5,1.6){\large $\mathcal{S}_{2,t}$};					
		\end{tikzpicture}
	\end{center}
	\caption{Two blue affine disks contain the $\C$-arcs $\hat{\alpha_{1,t}}$ and $\hat{\alpha_{2,t}}$, respectively. The intersection of two blue affine disks is the red affine segment, which lies in spinal sphere $\mathcal{S}_{2,t}$.}
\label{fig:disjoint-disk}
\end{figure}

\end{proof}

\textbf{The proof of Theorem \ref{thm:figure8}. } Similar to  Proposition \ref{prop:CrownRfuchsian}, for $t \in (3/8,  \sqrt{2}-1]$, the quotient space of  $$\partial_{\infty}D_{t}-(\cup^4_{i=1}(\hat{\alpha_{i,t}}\cup \hat{\beta_{i,t}}))$$ by the natural side-pairings  on $$ \cup^8_{j=1} \mathcal{A}_{i,t}\cup(\cup^4_{i=1}(\hat{\alpha_{i,t}}\cup \hat{\beta_{i,t}}))$$ is homeomorphic to the quotient space  $\Omega_{\Gamma_{t},g_1}/\Gamma_{t}$. 
By the geometric stability in the deformation,  that is,  Propositions  \ref{prop:arcends}
and  \ref{prop:disjoint}, the topology and combinatoris of $\partial_{\infty}D_{t}-(\cup^4_{i=1}(\hat{\alpha_{i,t}}\cup \hat{\beta_{i,t}}))$ does not change in the deformation, and the side-pairing pattern also does not change. So 
the quotient space of $\partial_{\infty}D_{t}-(\cup^4_{i=1}(\hat{\alpha_{i,t}}\cup \hat{\beta_{i,t}}))$ is homeomorphic to the quotient space of $\partial_{\infty}D_{t_0}-(\cup^4_{i=1}(\hat{\alpha_{i}}\cup \hat{\beta_{i}}))$ whenever  $t \in (3/8,  \sqrt{2}-1]$. By  Proposition \ref{prop:CrownRfuchsian}, the quotient space is the   figure-eight knot complement. This ends the proof of Theorem \ref{thm:figure8}.

\end{document}